\newtheorem{theorem}{Theorem}[section]
\newtheorem{lemma}[theorem]{Lemma}
\newtheorem{corollary}[theorem]{Corollary}
\newtheorem{proposition}[theorem]{Proposition}
\newcommand{\beq}{\begin{equation}}
\newcommand{\eeq}{\end{equation}}
\newcommand{\beqa}{\begin{eqnarray}}
\newcommand{\eeqa}{\end{eqnarray}}
\newcommand{\beqas}{\begin{eqnarray*}}
\newcommand{\eeqas}{\end{eqnarray*}}
\newcommand{\bi}{\begin{itemize}}
\newcommand{\ei}{\end{itemize}}
\def\QED{\ifhmode\unskip\nobreak\fi\ifmmode\ifinner\else\hskip5pt\fi\fi
  \hbox{\hskip5pt\vrule width5pt height5pt depth1.5pt\hskip1pt}}
\def\tx{{\tilde x}}
\def\tA{\widetilde A}
\def\ts{{\tilde s}}
\def\RR{{\mathbb{R}}}
\title{\Large An Extension of Chubanov's Polynomial-Time Linear Programming Algorithm to Second-Order Cone Programming}
\author{
Tomonari Kitahara %
\thanks{Tokyo Institute of Technology
  (Email: {\tt kitahara.t.ab@m.titech.ac.jp}).}
\and
Takashi Tsuchiya%
\thanks{National Graduate Institute for Policy Studies
(Email: {\tt tsuchiya@grips.ac.jp}). 
}
}
\date{November 2016\\
 (Revised: December 2016, January 2017)}
\begin{document}

\maketitle

\begin{abstract}

Recently, Chubanov proposed an interesting new polynomial-time algorithm for linear program.
In this paper, we extend his algorithm to second-order cone programming.

\vskip14pt

\noindent
{\bf Key words:}
Chubanov's algorithm, Linear programming, Second-order cone programming
\end{abstract}

\section{Introduction}

In linear programming, 
the ellipsoid method \cite{Kha1,Kha2} and the interior-point method \cite{Kar,NeNe} were 
the only two algorithms which enjoy polynomiality for a long time.
Recently, an interesting new polynomial-time algorithm was proposed by Chubanov \cite{Ch10,Ch12,Ch13}.
Related studies include, for instance, \cite{BaDeJu,LiRoTe,Pe16,Ro15}. 
In this paper, we develop a word-by-word extension of Chubanov's algorithm to second-order cone 
programming \cite{AlGo,Cu15,MoTs,NeNe,NeTo,Tsu}. Among the related works, Pe\~na and Soheili \cite{Pe16}
developed a polynomial-time projection and rescaling algorithm for a symmetric cone feasibility problem. 
Their algorithm utilizes Chubanov's idea and is closely related to ours in its direction.
We briefly compare the two approaches later to highlight the difference.

%


Consider the following homogeneous second-order cone programming feasibility problem (P):
\[
{\rm(P)}\ \ \ 
\hbox{find}_{(x_1;\ldots;x_n)}\ 
\sum A_i x_i = 0,\ \ x_i \in {\cal K}_i,\ i=1, \ldots, n,
\]
where $A_i\in \RR^{m\times {d_i}}$ for each $i$ and ${\cal K}_i \subseteq \RR^{d_i}$ is either a half-line or a second-order cone, i.e.,
\begin{eqnarray*}
{\cal K}_i &=& \{x_i\in \RR|\ x_i \geq 0\} \ \hbox{(if\ $d_i = 1$)},\\
{\cal K}_i &=& \{x_i = (x_{i0};x_{i1})\in \RR\times \RR^{{d_i}-1}|\ \|x_{i1}\|\leq x_{i0} \} \ \hbox{(if\ $d_i \geq 2$)}.
\end{eqnarray*}
We assume that vectors are in column form by default and the vertical concatenation of two vectors $a$ and $b$ is written as $(a;b)$.
We denote by $\cal SOC$ and $\cal LI$ the set of indices $i$ where ${\cal K}_i$ is a second-order cone and 
a half-line, respectively.
The dual problem (D) is
\[
{\rm(D)}\ \ \ 
\hbox{find}_{(s_1;\ldots;s_n)}\ s_i=-\sum A_i ^T u,\ \ s_i \in {\cal K}_i,\ i=1, \ldots, n,\ \ u\in \RR^m.
\]

Throughout this paper we use a notation analogous to $(x_{i0};x_{i1}) \in {\cal K}_i$ concerning a second-order cone.  When we deal with
a vector in a space where a second-order cone is defined, 
the first element $x_{i0}$ with ``the index 0" always represents the center axis of a second-order cone, and the second element $x_{i1}$ with the
``index 1" represents the rotational part, unless otherwise stated.
In the following, for a cone ${\widetilde {\cal K}}$, say, we use the notations $x\succeq y$ and $x\succ y$ to mean that $x - y \in \widetilde{\cal K}$ and
$x-y \in {\rm int}(\widetilde {\cal K})$, respectively.
Letting $A = (A_1, \ldots, A_n)$, ${\cal K}={\cal K}_1\times \ldots \times {\cal K}_n$, (P) and (D) are written as
\[
{\rm (P)}\ {\rm find}_x\  
A x = 0,\ \ x \succeq 0,\ \ \ {\rm (D)}\ {\rm find}_{(s,u)} \ s = -A^T u,\ \ s \succeq 0,
\]
where $x=(x_1; \ldots; x_n) \in \RR^{\bar n}$,  $s=(s_1; \ldots; s_n) \in \RR^{\bar n}$, $y\in\RR^m$ and ${\bar n}=\sum_{i=1}^n d_i$.
For simplicity, we assume that $A \in \RR^{m\times {\bar n}}$ is row independent.

By generalized Gordan's theorem, (P) has an interior feasible solution if and only if
(D) does not have a nonzero solution (i.e., zero is the only solution to (D)), and 
(D) has an interior feasible solution if and only if
(P) does not have a nonzero solution.  If we let
\[
{\rm (GP)}\ {\rm find}_x\  
A x = 0,\ \ x \succ 0,\ \ \ {\rm (GD)}\ {\rm find}_{s} \ s = -A^T u,\ \ s \succeq 0, \ s\not=0,\ u\in \RR^m,
\]
the Generalized Gordan's Theorem says that (GP) has a solution if and only if (GD) does not have a solution
and (GD) has a solution if and only if (GP) does not have a solution.

Given a matrix $B$, say, let $P_B$ be an orthogonal projection matrix to ${\rm Ker}(B)$.  If $B$ is a row independent
matrix, $P_B =I -B^T (B B^T)^{-1}B$. The problem (P) is written as
\[
{\rm find}_x\ x = P_{A} y,\ \ x \succeq 0,\ y\in \RR^{\bar n}.
\]
and the problem (D) is written as
\[
{\rm find}_y\ 
P_{A} y=0,\ \  y \succeq 0,
\]
(where ``the free variable" $u$ is eliminated).

We will develop a polynomial-time algorithm for finding a solution to (GP) or (GD) or detecting
no $\varepsilon$-interior feasible solution exists to (P) (the definition of 
$\varepsilon$-interior feasible solution is given below). 
In Appendix we describe how we can solve approximately 
a general second-order cone program with a primal-dual interior feasible solution by the algorithm
developed in this paper.

The problem (P) is equivalent to finding an interior feasible solution to the following system.
\[
Ax =0,\ \ \|x\|_\infty \leq 1,\ x \succeq 0.
\]
We denote by ${\cal F}$ the set of feasible solutions to this system.
We define the projection ${{\cal F}}_i$ of ${\cal F}$ onto the block $i$ as follows:
\begin{equation} \label{Fi}
{\cal F}_i = \{ x_i \in {\cal K}_i\subset \RR^{d_i}\ | x \in {\cal F} \}.
\end{equation}

For a point $x\in \RR^{d_1}\times \ldots \times \RR^{d_n}$, its minimum eigenvalue $\lambda_{\min}(x)$ 
is defined as
\[
\lambda_{\min}(x)=
\min(\min_{i\in{\cal LI}} x_i,\ \min_{i\in{\cal SOC}} x_{i0}-\|x_{i1}\|).
\]
A point $x \in {\cal K}$ is called an $\varepsilon$-interior point of ${\cal K}$ if $\lambda_{\min}(x) \geq \varepsilon$.
We define the maximum eigenvalue $\lambda_{\max}(x)$ as
\[\lambda_{\max}(x)=
\max(\max_{i\in{\cal LI}} x_i,\ \max_{i\in{\cal SOC}} x_{i0}+\|x_{i1}\|).
\]
If $x \in {\cal K}$, then 
the following equivalence relation holds betweens $\|\cdot\|_\infty$ and $\lambda_{\max}(\cdot)$:
\begin{equation}\label{equiv}
\|x\|_\infty \leq \lambda_{\max}(x) \leq 2\|x\|_\infty
\end{equation}
For $x_i\in {\cal K}_i$, the following quantity is called the determinant of $x_i$:
\[
{\rm det}(x_i) \equiv x_i\ \ \ \hbox{if}\ i\in{\cal LI}\ \ \ \hbox{and}\ \ \  
{\rm det}(x_i) \equiv x_{j0}^2-\|x_{j1}\|^2\ \ \ \hbox{if}\ i\in{\cal SOC}.
\]
The determinant of $x\in {\cal K}$ is defined as:
\[
{\rm det}(x) \equiv
\prod_{i\in{\cal LI}} x_i \prod_{j\in{\cal SOC}} (x_{j0}^2 -\|x_{j1}\|^2).
\]
Let $e = (e_1; \ldots; e_n)$, where 
\[
e_i=1\ \hbox{if}\ i\in {\cal LI},\ \ e_{i}=(1; 0_{d_i-1})\in \RR\times\RR^{d_i-1}\ \hbox{if}\ i\in{\cal SOC}.
\]
Here $0_{d_i-1}$ denotes the $d_i-1$ dimensional zero vector and we use analogous notation onwards.
We have the following proposition.
\begin{proposition}
Let $x\in \RR^{\bar n}$.  The following relations hold:
\begin{enumerate}
\item
$\lambda_{\min}(x) \geq \varepsilon \Leftrightarrow x \succeq \varepsilon e.$ 
\item $M \geq \lambda_{\max}(x)\Leftrightarrow Me \succeq x.$
\end{enumerate}
\end{proposition}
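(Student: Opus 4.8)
The plan is to exploit the product structure $\cal K = \cal K_1 \times \cdots \times \cal K_n$, which reduces each of the two claimed equivalences to a statement about a single block. By definition, $x \succeq \varepsilon e$ means $x - \varepsilon e \in \cal K$, and since $e = (e_1; \ldots; e_n)$ this holds if and only if $x_i - \varepsilon e_i \in \cal K_i$ for every $i$; likewise $Me \succeq x$ holds if and only if $Me_i - x_i \in \cal K_i$ for every $i$. So I would first record this blockwise reduction, then analyse one block at a time, and finally recombine the blockwise conditions through the $\min$ (resp.\ $\max$) that defines $\lambda_{\min}$ (resp.\ $\lambda_{\max}$).

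For part~1, fix a block $i$. If $i \in \cal LI$ then $e_i = 1$, so $x_i - \varepsilon e_i \in \cal K_i$ reads $x_i - \varepsilon \geq 0$, i.e.\ $x_i \geq \varepsilon$. If $i \in \cal SOC$ then $e_i = (1; 0_{d_i-1})$, so $x_i - \varepsilon e_i = (x_{i0} - \varepsilon;\, x_{i1})$, and membership in $\cal K_i$ is exactly $\|x_{i1}\| \leq x_{i0} - \varepsilon$, i.e.\ $x_{i0} - \|x_{i1}\| \geq \varepsilon$. In both cases the blockwise condition says precisely that the $i$-th term appearing in the definition of $\lambda_{\min}(x)$ is $\geq \varepsilon$. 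Taking the conjunction over all $i$ and using $\lambda_{\min}(x) = \min\bigl(\min_{i \in \cal LI} x_i,\ \min_{i \in \cal SOC}(x_{i0} - \|x_{i1}\|)\bigr)$ yields $x \succeq \varepsilon e \Leftrightarrow \lambda_{\min}(x) \geq \varepsilon$.

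Part~2 is entirely parallel, with signs reversed. For $i \in \cal LI$, $Me_i - x_i \in \cal K_i$ is $M - x_i \geq 0$, i.e.\ $x_i \leq M$. For $i \in \cal SOC$, $Me_i - x_i = (M - x_{i0};\, -x_{i1})$, whose membership in $\cal K_i$ is $\|{-x_{i1}}\| \leq M - x_{i0}$, i.e.\ $x_{i0} + \|x_{i1}\| \leq M$ since $\|\cdot\|$ is even. These are exactly the conditions that the terms in the definition of $\lambda_{\max}(x)$ are each $\leq M$, and their conjunction over $i$ is $\lambda_{\max}(x) \leq M$.

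I do not expect any genuine obstacle: the statement is a direct unpacking of the definitions of $e$, $\cal K_i$, $\lambda_{\min}$ and $\lambda_{\max}$. The only points requiring a little care are the bookkeeping of the block decomposition of cone membership and the observation that $\|{-x_{i1}}\| = \|x_{i1}\|$ in the second-order-cone blocks of part~2.
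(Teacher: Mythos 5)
Your proof is correct: the paper states this proposition without proof, treating it as an immediate consequence of the definitions, and your blockwise unpacking of $x - \varepsilon e \in {\cal K}$ and $Me - x \in {\cal K}$ into the half-line and second-order-cone conditions is exactly the intended verification. Nothing is missing.
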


Let $\varepsilon > 0$.
A point $x\in \RR^{\bar n}$ is called an $\varepsilon$-interior-feasible solution to (P) if the following condition is satisfied: 
\[
A x = 0, \ \ \ \lambda_{\min}(x) \geq \varepsilon,\ \ \ \hbox{or\ equivalently,\ \ \ }Ax = 0,\ \ \ \ x\succeq \varepsilon e. 
\]
We develop a polynomial-time algorithm to find an interior feasible solution to (P) or a 
nonzero feasible solution to (D),
or conclude that no $\varepsilon$-interior feasible solution exists to (P).  The algorithm terminates in
$O(n\log\varepsilon^{-1})$ iterations of a procedure called a basic procedure.  
The basic procedure requires
$O(n^3{\bar n}\max_i d_i+m{\bar n}^2)$ arithmetic operations (assuming that the standard linear algebraic procedures are employed).
Therefore, the algorithm terminates in 
$O((n^4{\bar n}\max_id_i+nm{\bar n}^2)\log\varepsilon^{-1}$)
 arithmetic operataions.
The basic procedure is a heart of Chubanov's algorithm.

In the following, we explain our algorithm in comparison with Chubanov's original algorithm, and discuss the difference between our algorithm 
and Pe\~na and Soheili's algorithm.
Chubanov's algorithm is to find a point in the intersection of a linear space and a unit hypercube,
i.e., the direct product of 0-1 segments.  For simplicity, we assume that the system is interior feasible. 
The algorithm first performs the basic procedure.
The basic procedure either (i) finds an interior feasible solution, or (ii) detects a variable whose value cannot be greater than 1/2 
in the feasible region.  Detection is done by finding a ``cut", a hyperplane to cut off the region where no feasible solution exists. 
Once such ``cut" is found, then the associated coordinate is rescaled by a factor of two so that the hypercube is recovered, 
to continue the same procedure.
In terms of the original 
coordinate, this process is regarded as
generating a series of shrinking convex bodies of the same type, i.e., a hyper-rectangle, which enclose feasible solutions.

Now we illustrate our algorithm.  For the ease of understanding, we assume that $\cal K$ is just a single second-order cone
and that there exists an interior feasible solution. We let $\varepsilon = 0$.
Let ${\cal F}$ be the intersection of the feasible solution to (P) and ${\cal C}_S = \{x \in {\cal K}|\ x_0\leq 1\}$, which we call ``the standard truncated second-order cone.''
The algorithm is to find an interior feasible solution in ${\cal F}$.  To this end, 
first we perform the basic procedure.  The basic procedure either (i) finds an interior solution to (P), or (ii) finds a hyperplane called a ``cut" $\{x|\ w^T x = w^T v\}$.  The cut defines an obliquely truncated second-order cone ${\cal C}_O = \{x|\ w^T x \leq w^T v,\ x \in {\cal K}\}$.
This cut is a natural extension of the one by Chubanov, and is one of the key ideas of this paper.
In virtue of the basic procedure, the set ${\cal C}_O$ contains ${\cal F}$ and has smaller volume than ${\cal C}_S$ at least by a constant factor.  
Thus, if a cut is found, we can shrink the region where the feasible solutions exist.  
Then, ${\cal C}_O$ is transformed to ${\cal C}_S$ with an automorphism transformation of the cone ${\cal K}$.  
We apply the same procedure to the transformed problem, and repeat it over and over.
This way, the algorithm constructs a series of shrinking obliquely truncated second-order cone which contains 
a nonzero feasible solution to (P).
It is shown that the volume of obliquely truncated second-order cone converges linearly to zero. Therefore, 
if there exists an interior feasible solution, then shrinkage cannot last forever and the algorithm and the basic procedure 
ends with (i) at a certain point.  
This is a rough sketch of the algorithm, and the idea will be generalized to the multiple cone case in the rest of this paper. 

Interestingly, while the new algorithm has similarity to the ellipsoid method in the sense that it generates
a series of shrinking convex bodies of the same type, it has some flavor of the interior-point method in that
it utilizes the automorphism group of the cone.  
The idea of the cut and the basic procedure is two key concepts in Chubanov's algorithm, and will be extended to the second-order
cone programming case in this paper.  
As is readily seen from the above explanations of the two algorithms, 
our algorithm is a word-by-word generalization of Chubanov's algortihm.

Pe\~na and Soheili \cite{Pe16} developed a polynomial-time projection and rescaling algorithm for the symmetric cone feasibility problem.
Their algorithm consists of rescaling step and the basic procedure to find a vector for rescaling, where rescaling procedure is 
inspired by Chubanov's idea.
They measure the progress of the algorithm with a condition number of the system which is essentially the determinant.
The condition number is bounded above by one, and the system whose condition number is closer to one is better conditioned.  In their algorithm,
the condition number is increased by a constant factor at each iteration by rescaling, or the algorithm finds an interior feasible 
solution.  The Chubanov's cut vector is used as an algebraic machinery to rescale the system properly.
Their algorithm plays with scaling (or metric), but does not change the shape of the region on focus.
This makes a remarkable contrast with our approach as we argue below.

Our algorithm uses the cut to confine the region of existence of the feasible solutions
and generates a series of shrinking convex bodies of the same type containing the feasible region.
In this regard, our algorithm is geometrically intuitive and can be considered as a direct and 
word-by-word extension of Chubanov's algorithm. 
In our algorithm, we measure the progress of the algorithm with the volume of the shrinking area of 
existence of the feasible solutions, which is essentially the determinant.
Thus, the determinant plays crucial roles in the both algorithms and they share some features in common.

%



The paper is organized as follows.  In Section 2, we introduce the second-order cone and its automorphism group,
and study some basic properties of the truncated second-order cones.  In Section 3, we discuss an extension of 
Chubanov's fundamental relation in the context of second-order cone programming.  
In Section 4, we extend and analyze the basic procedure.  
In Sections 5, we develop the main algorithm.
%
In Section 6, we make some remarks.  Section 7 is a conclusion.

\medskip
\noindent
{\bf Note added at the Second Revision (January 2017):}

We removed ``Section 6: Full Version" of the paper, 
because we found a flaw in its complexity analysis.
The intension of that section was to reduce the complexity by a factor of $n$ from the algorithm 
in Section 5 by adapting Chubanov's 
elegant idea \cite{Ch13} of initiating a basic procedure using the second last iterate of the preceding basic procedure.
We realized that the argument we made in the previous version does not work.
We feel very sorry to the readers about this mistake, but we consider that the main part of the paper, development of 
an extension of Chubanov's algorithm to second-order cone program and its polynomial-time complexity analysis, yet
survives.

\medskip
\noindent
{\bf Note added at the First Revision (December 2016):}
\begin{enumerate}
\item We corrected an nontrivial error in evaluating complexity of the basic procedure.
In the first version released in November 2016, we conducted analysis assuming that one iteration of the
basic procedure can be done in $O(\bar n)$ arithmetic operations like in the case of linear program.  
But later we realized that the argument in the first version was not correct and 
that one iteration of the basic procedure requires $O(\bar n d_i)$ arithmetic operations.  
At Step 5 of the basic procedure, we compute $P_A \eta$ and this requires $O(\bar n d_i)$ arithmetic operations.  
This affects overall complexity estimate of the entire algorithms.  We corrected them accordingly. 
We feel very sorry for the confusion caused by this flaw.
\item 
We refer the reference \cite{Pe16} and added related considerations in this introduction.
We also updated references and corrected some misleading statements related to \cite{Ro15}.
A few (easily fixable) mathematical errors are also corrected.
\end{enumerate}

\section{Preliminary Observations}

We assume that ${\cal K}$ is a $d$-dimensional second-order cone ($d\geq 2$).  For $w, v \in \RR^d$, we define
\[
H(w, v) = \{x\in \RR^d| w^T x \leq w^T v\},
\]
i.e., $H(w,v)$ is the half space in $\RR^d$ whose boundary normal vector is $w$ and $v$ is on the boundary.
The boundary hyperplane of $H(w,v)$ is written as $\partial H(w,v)$.

Let $e=(1;0_{d-1})\in \RR\times\RR^{d-1}$. 
The intersection of the second-order cone and the half space $H(e,e) = \{(x_0; x_1)\in \RR\times \RR^{d-1}|x_0 \leq 1\}$
is referred to as the standard truncated second-order cone (S-TSOC).
We denote by $V_{d}$ the volume of $k$-dimensional S-TSOC.  Its concrete 
formula is:
\[
V_d=\frac{\pi^{(d-1)/2}}{(d-1) \Gamma(\frac{d-1}2+1)},
\]
which is obtained by integrating the volume of $(d-1)$-dimensional hypersphere from the radius $0$ to 1.



Let $w \in {\rm int}({\cal K})$ and $v \in {\rm int}({\cal K})$.
Then ${\cal K}\cap H(w, v)$ is a non-empty bounded domain 
which is obtained by cutting ${\cal K}$ with a tilted hyperplane.  This set is referred to as an obliquely truncated second-order cone (O-TSOC).
We let
\[
{\cal C}(w,v)=\{x |x \in {\cal K}\cap H(w, v) \}.
\] 
With this notation, S-TSOC is written as ${\cal C}(e, e)$.  

The automorphism group of a cone $\widetilde {\cal K}$ is the set of linear transformations $\tilde G$ such that
\[
\widetilde {\cal K} = \widetilde G\widetilde{\cal K}.
\]
We denote by $Aut(\widetilde {\cal K})$ the automorphism group of $\widetilde {\cal K}$.

Let $w\in {\rm int}({\cal K})$ and $v\in \RR^d$ be such that ${\cal C}(w,v)\not=\emptyset$.
In the following, we show that there exists an element $G$ of $Aut({\cal K})$ which maps
S-TSOC ${\cal C}(e,e)$ to ${\cal C}(w, v)$.  This $G$ plays an important role throughout our algorithm 
development and analysis.

We start with the following statement.

\begin{proposition}\label{auto}
If $G\in \RR^{d\times d}$ satisfies the following conditions:
\begin{enumerate}
\item \begin{equation} \label{gteg}
G^T\left(\begin{array}{cc}1 & 0 \\ 0 & -I \end{array}\right) G = \lambda \left(\begin{array}{cc}1 & 0 \\ 0 & -I \end{array}\right),\ \lambda > 0
\end{equation}
\item There exists a point $\eta\in{\rm int}({\cal K})$ such that $G \eta \in {\rm int}({\cal K})$,
\end{enumerate}
then, ${\cal K}=G{\cal K}$, ${\cal K}=G^T{\cal K}$ and hence $G$ and $G^T$ are elements of $Aut({\cal K})$.
\end{proposition}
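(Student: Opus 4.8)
The plan is to exploit the description of the second-order cone as one nappe of a Lorentz light cone. Write $J$ for the symmetric matrix $\left(\begin{smallmatrix}1 & 0\\ 0 & -I\end{smallmatrix}\right)$ appearing in (\ref{gteg}), so that $x^TJx = x_0^2-\|x_1\|^2$ and ${\cal K} = \{x\in\RR^d\mid x^TJx\ge 0,\ x_0\ge 0\}$. First I would observe that $G$ is invertible: taking determinants in (\ref{gteg}) gives $(\det G)^2\det J = \lambda^d\det J$, and since $\det J = (-1)^{d-1}\neq 0$ and $\lambda>0$ this forces $(\det G)^2 = \lambda^d > 0$. Condition~1 also says $(Gx)^TJ(Gx) = \lambda\,x^TJx$ for all $x$, so $G$ maps the closed ``double cone'' $Q := \{x\mid x^TJx\ge 0\}$ into itself; applying the same reasoning to $G^{-1}$ (which satisfies (\ref{gteg}) with $\lambda^{-1}$) shows $GQ = Q$, and likewise $G$ maps the open set ${\rm int}(Q) = \{x\mid x^TJx>0\}$ onto itself.

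The crux is then a connectedness argument. One checks that $\{x\mid x^TJx>0\} = {\rm int}({\cal K})\,\sqcup\,{\rm int}(-{\cal K})$, a disjoint union of two nonempty open convex — hence connected — sets, so these are exactly the two connected components of ${\rm int}(Q)$. Since $G$ restricts to a homeomorphism of ${\rm int}(Q)$ onto itself, it permutes these two components. Hypothesis~2 provides $\eta\in{\rm int}({\cal K})$ with $G\eta\in{\rm int}({\cal K})$; thus $G$ sends the component containing $\eta$, namely ${\rm int}({\cal K})$, onto the component containing $G\eta$, again ${\rm int}({\cal K})$. Hence $G\,{\rm int}({\cal K}) = {\rm int}({\cal K})$, and taking closures (using ${\cal K} = \overline{{\rm int}({\cal K})}$ and that $G$ is a homeomorphism of $\RR^d$) gives ${\cal K} = G{\cal K}$. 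I expect the only delicate point to be this topological bookkeeping — chiefly the verification that ${\rm int}(Q)$ has precisely the two claimed components; the rest is routine linear algebra.

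Finally, for $G^T$ I would first record the algebraic identity $GJG^T = \lambda J$: since $J^2 = I$, (\ref{gteg}) gives $G^{-1} = \lambda^{-1}JG^TJ$, hence $G^T = \lambda JG^{-1}J$, and substituting back yields $GJG^T = \lambda GJ^2G^{-1}J = \lambda J$. So $G^T$ is again invertible and satisfies condition~1 with the same $\lambda$, whence by the argument above $G^T{\cal K}\in\{{\cal K},-{\cal K}\}$. To rule out $G^T{\cal K} = -{\cal K}$, I would use self-duality of the second-order cone (${\cal K}^* = {\cal K}$): if $\eta,G\eta\in{\rm int}({\cal K})$ then $\langle\eta,G^T\eta\rangle = \langle G\eta,\eta\rangle > 0$, which is incompatible with $G^T\eta\in-{\rm int}({\cal K})$. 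Equivalently and more directly, $y\in{\cal K}^* = (G{\cal K})^*\iff G^Ty\in{\cal K}^* = {\cal K}$, so ${\cal K} = (G^T)^{-1}{\cal K}$. Either way ${\cal K} = G^T{\cal K}$, and therefore $G,G^T\in Aut({\cal K})$.
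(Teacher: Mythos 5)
Your proof is correct, and although it rests on the same core mechanism as the paper's --- $G$ preserves the Lorentz form up to a positive factor, hence can only permute the two nappes of the double cone, and hypothesis~2 pins down which permutation occurs --- the organization differs, and in one place you genuinely improve on the paper. For $G{\cal K}={\cal K}$, the paper argues on the boundary: it shows $G\partial{\cal K}\subseteq\partial{\cal K}$ by an intermediate-value argument along the segment from $\eta$ to a hypothetical boundary point sent to the lower nappe, then applies the same reasoning to $G^{-1}$ to get equality. Your version, phrased as ``$G$ permutes the two connected components ${\rm int}({\cal K})$ and ${\rm int}(-{\cal K})$ of $\{x\mid x^TEx>0\}$,'' is the same idea made tidier: the paper's segment argument is precisely a hands-on proof that these components cannot be mixed, and your passage to closures of interiors cleanly replaces the paper's somewhat terse step from $G\partial{\cal K}=\partial{\cal K}$ to $G{\cal K}={\cal K}$. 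Where you truly diverge is the claim for $G^T$: the paper derives $GEG^T=\lambda E$ exactly as you do, but then verifies $G^Te\in{\rm int}({\cal K})$ by an explicit coordinate computation (using that $\eta\in{\rm int}({\cal K})$ iff $\eta^Te/\|\eta\|\|e\|>1/\sqrt{2}$, together with $(Ge)_0=(G^Te)_0$ and $\|Ge\|=\|G^Te\|$). Your self-duality argument --- $\langle\eta,G^T\eta\rangle=\langle G\eta,\eta\rangle>0$ rules out $G^T{\cal K}=-{\cal K}$, or more directly $(G{\cal K})^*=(G^T)^{-1}{\cal K}^*$ --- is shorter, avoids coordinates entirely, and carries over verbatim to any self-dual cone, which is worth noting given the paper's stated interest in extensions to symmetric cone programming; the price is that it invokes self-duality of ${\cal K}$, a fact the paper's elementary computation does not need to name.
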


\begin{proof}
We fix $\lambda=1$ and show that if the condition 1 with $\lambda=1$ and 
the condition 2 are satisfied, then $G \in Aut({\cal K})$ holds.
This is enough to prove the proposition with general $\lambda >0$.

The main part of the proof is to show that $G$ is invertible and 
$G \partial {\cal K} = \partial {\cal K}$, where $\partial {\cal K}$ is the boundary of ${\cal K}$. Once this is shown, $G{\cal K} = {\cal K}$ readily follows since $G$ is a linear transformation.  After this, we will proceed to demonstrate that $G^T \in Aut({\cal K})$.

We prove that $G$ is invertible and $G \partial {\cal K} = \partial {\cal K}$.
The condition 1 immediately implies that $G$ is an invertible matrix.
Consider the image $G\partial {\cal K}$ where
\[
\partial {\cal K} =\{(x_0; x_1)\in\RR\times\RR^{d-1}|x_0 = \|x_1\|,\ x_0 \geq 0\}.
\]
Let $y(x)=Gx$ and let $x\in \partial {\cal K}$.
Since $\lambda=1$ and hence $y_0(x)^2 - \|y_1(x)\|^2 = x_0^2 - \|x_1\|^2$ holds,we have $y_0(x) = \|y_1(x)\|$ or $y_0(x) = -\|y_1(x)\|$.  We show that the second case never occurs.
Suppose that there exists a point $\hat x \in \partial {\cal K}$ such that $y_0({\hat x}) = -\|y_1({\hat x})\|<0$.  
Consider the line $x(t) = (1-t)\eta + t{\hat x}$,
and let $y(t) = Gx(t)$. Then, since $y(0)=G\eta\in{\rm int}({\cal K})$, we have $y_0(0)>0$  
but $y_0(1)<0$, yielding that $y_0(\hat t)=0$ for some $0\leq\hat t< 1$. 
This implies that $y_1(\hat t)= 0$ as well.  However, 
since $x(\hat t)$ is in the interior of ${\cal K}$, we have $x_0(\hat t)^2-\|x_1(\hat t)\|^2 >0$ whereas 
$y_0(\hat t)=\|y_0(\hat t)\|$ and hence $y_0(\hat t)-\|y_0(\hat t)\|=0$, which is a clear contradiction
to the condition 1.  Thus, whenever $x_0 = \|x_1\|$, we have $y_0(x) = \|y_1(x)\|$.  
This shows that $G\partial {\cal K}\subseteq \partial{\cal K}$.  If we take $G^{-1}:=G$, $G^{-1}$ satisfies the
conditions 1 and 2.  Therefore, we have $G^{-1} \partial {\cal K} \subseteq \partial{\cal K}$ and hence
$\partial {\cal K} \subseteq G \partial {\cal K}$. 
Thus, we have shown $\partial {\cal K}= G \partial {\cal K}$.  
Since $G$ is a linear transformation, 
${\cal K} = G {\cal K}$ follows immediately.

Now we show that $G^T\in Aut({\cal K})$. Let
\[
E =  \left(\begin{array}{cc}1 & 0 \\ 0 & -I \end{array}\right) .
\]
Multiplying $GE$ from the left on the both sides of (\ref{gteg}) and by using $E^2 = I$ and that $G$ is invertible, we 
obtain $G E G^T = E$.  In order to show $G^T e \in {\rm int}({\cal K})$, we use the fact that 
$\eta\in {\rm int}({\cal K})$ if and only if $\eta^T e/\|\eta\|\|e\| > 1/\sqrt{2}$.  We apply this by choosing $\eta= G^T e$.
Since $Ge \in {\rm int}({\cal K})$, we have $e^T Ge/\|Ge\|\|e\| > 1/\sqrt{2}$.  Then it follows that 
$e^T (G^T e)/\|Ge\|\|e\|>1/\sqrt{2}$.  It remains to show that $\|Ge\|=\|G^T e\|$. Since $G^T EG =G E G^T = E$, we have
\[
(Ge)_0^2 - \|(Ge)_1\|^2 =(G^Te)_0^2-\|(G^T e)_1\|^2 =1,
\]
Since $(Ge)_0 = (G^T e)_0$, we have $\|Ge\|=\|G^Te\|$, and we are done.
\end{proof}


In the following, we will find $G \in Aut({\cal K})$ such that
\[
{\cal C}(w,v) = G {\cal C}(e,e).
\]
Since
\[
{\cal C}(w,v)=H(w,v)\cap{\cal K} = G C(e,e) =G(H(e,e)\cap{\cal K})= (G H(e,e))\cap G{\cal K}=(G H(e,e)) \cap {\cal K},
\]
it is enough to find an element $G$ of the automorphism
group such that $H(w,v) = G H(e,e)$, and since $w^T v > 0$, this amounts to finding $G$ such that
$\partial H(w,v)=G\partial H(e,e)$ 
where $\partial H(w,v)=\{x|w^T x = w^T v\}$
and $\partial H(e,e)=\{u|e^T u = e^T e\}=\{u|u_0 = 1\}$. 

Since $\partial H(e,e) = \{u | u \in (1;u_1),\ u_1\in\RR^{d-1}\}$,  
We have $G \partial H(e,e) = \{x =G u|\ u \in (1; u_1),\ u_1 \in \RR^{d-1}\}$.
The tangent space of $G\partial H(e,e)$ is $T_1 = \{\Delta x= G(0,u_1)|u_1\in \RR^{d-1}\},$
and this should be equal to the tangent space $T_2 = \{\Delta x | w^T \Delta x = 0, \Delta x \in \RR^d\}$ of $\partial H(w,v)$.
Since $T_1=T_2$ should hold,  
\[
w^T G\left(\begin{array}{c}0\\u_1\end{array}\right) = 0\ \forall \ u_1\in \RR^{d-1}.
\] 
Therefore, we have $w^T G =(\lambda; 0)^T=\lambda e^T$ with $\lambda\not=0$ (c.f. $0\not=w\in {\rm int}({\cal K})$).  
This implies that $w = \lambda G^{-T} e$ and equivalently $\lambda e = G^T w$.
Since $G \in  Aut({\cal G})$, so is 
$G^T$, then we have $\lambda e \in {\rm int}({\cal K})$ and hence $\lambda>0$.
Note that $\lambda G^{-T}$ is an element of $Aut({\cal K})$ which maps $e$ to 
$w$.  Since $Ge \in \partial H(w,v)(=\{x | w^T x = w^T v\})$, we have $w^T Ge = w^T v$.  Substituting
$w = \lambda G^{-T} e$ into this formula, we obtain that $\lambda = w^T v$.
In summary, if $G{\cal C}(e,e) = C(w,v)$ and $G\in Aut({\cal K})$,
$G$ should satisfy $w = w^T v G^{-T} e$.  On the other hand, if we can find $G\in Aut({\cal K})$ satisfying
this condition, we have $G{\cal C}(e,e) = C(w,v)$.
In the following, we find $G\in Aut({\cal K})$ satisfying the condition $w = w^T v G^{-T} e$.

Let $\alpha=w_0/\gamma$, $\beta=w_1/\gamma$, where $\gamma= \sqrt{w_0^2 - \|w_1\|^2}$, 
and let
\[
\tilde G =
\left(\begin{array}{cc} \alpha & \beta^T \\ \beta & I + \frac{\beta\beta^T}{1+\alpha}\end{array}\right).
\]
It is not difficult to check that $\tilde G$ satisfies the conditions 1 and 2 in Proposition \ref{auto}, 
being a member of $Aut({\cal K})$.
In particular, we see that ${\rm det}(\tilde G) = 1$ and $\tilde G e = (\alpha;\beta) = w/\gamma$. 
Hence we have $\gamma\widetilde G e = w$. 
Since $w^T v G^{-T} e = w$,
we let $w^Tv G^{-T} = \gamma \widetilde G$ and obtain 
\[
G = w^T v \gamma^{-1}\tilde G^{-1}.
\]
By direct calculation it is easy to confirm that 
\[
\tilde G^{-1} = 
\left(\begin{array}{cc} \alpha & -\beta^T \\ -\beta & I + \frac{\beta\beta^T}{1+\alpha}\end{array}\right).
\]
Therefore, we have
\begin{equation}
\label{transform}
G = \gamma^{-1} w^T v \tilde G^{-1}=(\alpha v_0+\beta^T v_1)\left(\begin{array}{cc} \alpha & -\beta^T \\ -\beta & I + \frac{\beta\beta^T}{1+\alpha}\end{array}\right)
\end{equation}
and
\begin{equation}
{\rm vol}({\cal C}(w,v)) = (\alpha v_0 + \beta^T v_1)^d V_{d}.
\end{equation}
Thus, we obtain the following proposition.
\begin{proposition}\label{p1}
Let $w=(w_0; w_1) \in {\rm int}({\cal K})$, $v \in {\rm int}({\cal K})$, and
consider O-TSOC ${\cal C}(w, v) = \{x|w^T x \leq w^Tv,\ x \in {\cal K}\}$.
Then the matrix
\[ 
G=(\alpha; \beta)^T v\left(\begin{array}{cc} \alpha & -\beta^T \\ -\beta & I + \frac{\beta\beta^T}{1+\alpha}\end{array}\right),
\]
where
\[
\alpha= \frac{w_0}{\sqrt{w_0^2-\|w_1\|^2}}\ \ \ \hbox{and} \ \ \ \beta= \frac{w_1}{\sqrt{w_0^2-\|w_1\|^2}},
\]
maps S-TSOC ${\cal C}(e,e)$ to ${\cal C}(w, v)$, i.e.,
\[
{\cal C}(w,v) = G {\cal C}(e,e)
\]
and
\[
{\rm vol}({\cal C}(w,v)) = \left(\frac{w^T v}{\sqrt{w_0^2-\|w_1\|^2}}\right)^d V_{d}.
\]
\end{proposition}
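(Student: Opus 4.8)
The statement collects the construction carried out in the paragraphs immediately above it, so the plan is simply to assemble those pieces and to confirm, by direct computation, that the explicitly displayed matrix $\widetilde G$ has the properties the abstract argument requires. I would begin by writing $\gamma=\sqrt{w_0^2-\|w_1\|^2}$, which is strictly positive because $w\in{\rm int}({\cal K})$, and setting $\alpha=w_0/\gamma$, $\beta=w_1/\gamma$, so that the single identity $\alpha^2-\|\beta\|^2=1$ holds, together with $\alpha\geq 1$; this identity is what drives all the computations below.

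The first main step is to verify that $\widetilde G$ satisfies conditions~1 and~2 of Proposition~\ref{auto}. For condition~1 with $\lambda=1$, I would compute the blocks of $\widetilde G^T E\widetilde G$ with $E=\left(\begin{smallmatrix}1&0\\0&-I\end{smallmatrix}\right)$ the sign matrix of~(\ref{gteg}); using $\widetilde G^T=\widetilde G$, the $(1,1)$ entry is $\alpha^2-\|\beta\|^2=1$, the off-diagonal blocks are scalar multiples of $\beta$ (resp.\ $\beta^T$) with scalar factor $\alpha-1-\|\beta\|^2/(1+\alpha)$, and the $(2,2)$ block equals $-I+\beta\beta^T\bigl(1-2/(1+\alpha)-\|\beta\|^2/(1+\alpha)^2\bigr)$; the last two corrections vanish because their numerators are $\alpha^2-1-\|\beta\|^2=0$, so $\widetilde G^T E\widetilde G=E$. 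For condition~2 I would take $\eta=e$ and use $\widetilde G e=(\alpha;\beta)=w/\gamma\in{\rm int}({\cal K})$. Hence $\widetilde G\in Aut({\cal K})$ by Proposition~\ref{auto}. Along the way I would also evaluate $\det\widetilde G$ by expanding along the scalar $(1,1)$ block and using the matrix-determinant lemma: $\det\widetilde G=\alpha\,\det\bigl(I-\beta\beta^T/(\alpha(1+\alpha))\bigr)=\alpha\bigl(1-\|\beta\|^2/(\alpha(1+\alpha))\bigr)=1$, again by $\alpha^2-1=\|\beta\|^2$.

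Next I would set $G=\gamma^{-1}(w^Tv)\,\widetilde G^{-1}$. Since $w,v\in{\rm int}({\cal K})$, an elementary estimate (equivalently, self-duality of the second-order cone) gives $w^Tv>0$, that is $\alpha v_0+\beta^Tv_1=(w^Tv)/\gamma>0$; thus $G$ is a positive scalar multiple of $\widetilde G^{-1}\in Aut({\cal K})$ and hence $G\in Aut({\cal K})$. Because $\widetilde G$ is symmetric, $G^{-T}=\gamma(w^Tv)^{-1}\widetilde G$, so $w^Tv\,G^{-T}e=\gamma\widetilde G e=w$; i.e.\ $G$ satisfies exactly the normalization $w=w^Tv\,G^{-T}e$ which the discussion preceding the statement showed to be equivalent, for $G\in Aut({\cal K})$, to $G\,{\cal C}(e,e)={\cal C}(w,v)$. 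This gives the first assertion. For the volume, a linear change of variables gives ${\rm vol}({\cal C}(w,v))={\rm vol}(G\,{\cal C}(e,e))=|\det G|\,{\rm vol}({\cal C}(e,e))=|\det G|\,V_d$, and $\det G=(\gamma^{-1}w^Tv)^d\det(\widetilde G^{-1})=(\gamma^{-1}w^Tv)^d>0$, so $|\det G|=\bigl(w^Tv/\sqrt{w_0^2-\|w_1\|^2}\bigr)^d$, which is the claimed formula.

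I do not expect a genuine obstacle here: the mathematical substance is already in the surrounding text, and what remains is verification together with a little bookkeeping. The points deserving care are (i) matching the \emph{truncated} cone, flat face $\partial H(w,v)$ included, rather than merely the cone ${\cal K}$ — this is exactly why $w^Tv>0$ and the precise normalization $w=w^Tv\,G^{-T}e$ are needed — and (ii) exploiting the symmetry of $\widetilde G$ so that $G^{-T}$ collapses to a scalar multiple of $\widetilde G$. The block identities for condition~1 of Proposition~\ref{auto} and the evaluation $\det\widetilde G=1$ are routine once $\alpha^2-\|\beta\|^2=1$ is in hand, but they carry essentially all of the computational load.
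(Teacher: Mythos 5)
Your proposal is correct and follows essentially the same route as the paper: derive the normalization $w=w^Tv\,G^{-T}e$ from the preceding discussion, exhibit $\widetilde G$, check it against Proposition~\ref{auto}, and read off the volume from $\det G$. The only difference is that you carry out explicitly the block computations (condition~1 and $\det\widetilde G=1$) that the paper dismisses as ``not difficult to check,'' and your computations are correct.
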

Suppose that $v\in {\rm int}({\cal K})$ is given, and that we want to find $w=(w_0;w_1)\in \RR\times \RR^{d-1}$ which minimizes the volume of 
${\cal C}(w, v)$.  Since $w\in {\rm int}({\cal K})$, without loss of generality, we may assume that $w=(\alpha; \beta)\in \RR\times \RR^{d-1}$ satisfies
$\alpha^2 -\|\beta\|^2 = 1$.  Furthermore, 
due to rotational symmetry with respect to the 0th axis, 
we assume that, without loss of generality, $v =(\xi_0; \xi_1; 0_{d-2})\in \RR\times\RR\times\RR^{d-2}$.
In order to minimize ${\rm vol}({\cal C}(w,v))$, we just minimize
\[
\min\ \alpha \xi_0 + \beta_1 \xi_1,\ \ \ \hbox{s.t.}\ \alpha^2 - \beta^T \beta =1.
\]
Solving this problem, we obtain that
\[
\alpha = \frac1{\sqrt{1-\eta^2}},\ \ \ \beta = -\frac{\eta}{\sqrt{1-\eta^2}},
\]
where $\eta=\xi_1/\xi_0$.
and the optimal value is:
\[
\sqrt{\xi_0^2 - \xi_1^2}.
\]
This consideration is summarized as the following proposition:
\begin{proposition}\label{p2}
Let $v\in {\rm int}({\cal K})$.
A normal vector $w \in {\rm int}({\cal K})$ which minimizes the volume 
${\rm vol}({\cal C}(w,v))$ is given as
\[
w = \left(\begin{array}{c}v_0 \\ -v_1\end{array}\right)
\]
and 
\[
{\rm vol}({\cal C}(w,v)) = (v_0^2 -\|v_1\|^2)^{d/2} V_d.
\]
\end{proposition}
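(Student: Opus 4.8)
I would begin from the volume formula of Proposition~\ref{p1}, namely ${\rm vol}(\mathcal{C}(w,v)) = \bigl(w^{T}v/\sqrt{w_0^{2}-\|w_1\|^{2}}\bigr)^{d}V_{d}$, and observe that its right-hand side depends on $w$ only through its direction: replacing $w$ by $tw$ with $t>0$ changes neither the set $\mathcal{C}(w,v)$ nor the ratio $w^{T}v/\sqrt{w_0^{2}-\|w_1\|^{2}}$. Hence there is no loss in normalizing $w$ so that $w_0^{2}-\|w_1\|^{2}=1$; then $w_0=\sqrt{1+\|w_1\|^{2}}>\|w_1\|$, so $w\in{\rm int}(\mathcal{K})$ is automatic, and minimizing the volume over $w\in{\rm int}(\mathcal{K})$ reduces to solving
\[
\min\ w^{T}v\quad\hbox{subject to}\quad w_0^{2}-\|w_1\|^{2}=1,\ \ w_0>0,
\]
after which ${\rm vol}(\mathcal{C}(w,v))=(w^{T}v)^{d}V_{d}$.

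Next I would eliminate $w_0$ and regard the objective as the function $f(w_1):=v_0\sqrt{1+\|w_1\|^{2}}+w_1^{T}v_1$ of $w_1\in\RR^{d-1}$. Since $v\in{\rm int}(\mathcal{K})$ forces $v_0>\|v_1\|\ge 0$, the map $w_1\mapsto v_0\sqrt{1+\|w_1\|^{2}}$ is strictly convex, and, using $\sqrt{1+\|w_1\|^{2}}\ge\|w_1\|$, one gets $f(w_1)\ge(v_0-\|v_1\|)\|w_1\|$, so $f$ is also coercive; adding the linear term $w_1^{T}v_1$ preserves strict convexity and coercivity. Therefore $f$ attains its infimum at a unique point, which is the unique solution of the stationarity equation $\nabla f(w_1)=v_0\,w_1/\sqrt{1+\|w_1\|^{2}}+v_1=0$, i.e. of $w_1/w_0=-v_1/v_0$. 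Equivalently, the optimal (normalized) $w$ is a positive multiple of $(v_0;-v_1)$; as $v_0>\|v_1\|$, the vector $(v_0;-v_1)$ itself lies in ${\rm int}(\mathcal{K})$, and since only the direction of $w$ affects the volume, the minimizer may be stated simply as $w=(v_0;-v_1)$.

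It then remains to evaluate the optimum. The normalized minimizer is $w=(v_0;-v_1)/\sqrt{v_0^{2}-\|v_1\|^{2}}$ (the radicand is positive because $v$ is interior, and the positive sign of the scalar is exactly what makes $w_0>0$), so
\[
w^{T}v=\frac{v_0^{2}-\|v_1\|^{2}}{\sqrt{v_0^{2}-\|v_1\|^{2}}}=\sqrt{v_0^{2}-\|v_1\|^{2}},
\]
whence ${\rm vol}(\mathcal{C}(w,v))=(v_0^{2}-\|v_1\|^{2})^{d/2}V_{d}$, as claimed. Alternatively, one may argue as in the computation preceding the statement: use the rotational automorphisms of $\mathcal{K}$ about the $0$-axis to reduce to $v=(\xi_0;\xi_1;0_{d-2})$ with $\xi_0=v_0$, $\xi_1=\|v_1\|$, parametrize $w=(\alpha;\beta)$ with $\alpha^{2}-\|\beta\|^{2}=1$, discard the coordinates of $\beta$ beyond the first (they leave $w^{T}v=\alpha\xi_0+\beta_1\xi_1$ unchanged while only forcing $\alpha$, hence $\alpha\xi_0$, to increase), and minimize $\alpha\xi_0+\beta_1\xi_1$ in the single variable $\beta_1$, recovering $\alpha=1/\sqrt{1-\eta^{2}}$, $\beta_1=-\eta/\sqrt{1-\eta^{2}}$ with $\eta=\xi_1/\xi_0$ and optimal value $\sqrt{\xi_0^{2}-\xi_1^{2}}=\sqrt{v_0^{2}-\|v_1\|^{2}}$.

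I do not expect a real obstacle. The only two points that call for care are the scale-invariance that legitimizes the normalization $w_0^{2}-\|w_1\|^{2}=1$, and checking that the stationary point is a global minimizer rather than a saddle or maximizer; the latter is handled cleanly by the strict convexity and coercivity of $f(w_1)$, which give existence, uniqueness, and global optimality simultaneously, so that no separate compactness or second-order test is needed.
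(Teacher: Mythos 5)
Your proposal is correct and follows essentially the same route as the paper: exploit scale invariance to normalize $w$ so that $w_0^2-\|w_1\|^2=1$ and then minimize the linear functional $w^Tv$ over that sheet of the hyperboloid, exactly as in the computation the paper gives just before the proposition (which you also reproduce as your alternative). Your only addition is the explicit strict-convexity and coercivity argument for $f(w_1)=v_0\sqrt{1+\|w_1\|^2}+w_1^Tv_1$, which cleanly certifies that the stationary point is the global minimizer --- a point the paper's Lagrange-style calculation leaves implicit.
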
 

\begin{corollary} \label{p3}
Let $\hat x \in {\rm int}({\cal K})$.  Then, the minimum volume O-TSOC containing $\hat x$ is given as
\[
{\cal C}\left((\hat x_0; -\hat x_1), \hat x\right),
\]
and hence the minimum volume is given as
\[
{\rm vol}\left({\cal C}\left((\hat x_0; -\hat x_1), \hat x\right)\right)
=(\hat x_0^2 - \|\hat x_1\|^2)^{d/2}V_d.
\]
\end{corollary}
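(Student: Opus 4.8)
The plan is to minimize the volume by first optimizing over the ``offset point'' $v$ with the normal $w$ held fixed, and then over $w$, so that the second step becomes a verbatim application of Proposition~\ref{p2}. Recall from Proposition~\ref{p1} that whenever $w\in{\rm int}({\cal K})$ and $w^Tv>0$,
\[
{\rm vol}({\cal C}(w,v)) = \left(\frac{w^T v}{\sqrt{w_0^2-\|w_1\|^2}}\right)^{\!d} V_d,
\]
so the O-TSOC ${\cal C}(w,v)$ and its volume depend on $v$ only through the scalar $w^Tv$, and the volume is strictly increasing in that scalar.

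First I would fix $w\in{\rm int}({\cal K})$ and observe that, since ${\cal K}$ is self-dual and $\hat x\in{\rm int}({\cal K})$, we have $w^T\hat x>0$; hence ${\cal C}(w,\hat x)$ is a legitimate O-TSOC, and it contains $\hat x$, which lies on its bounding hyperplane $\{x\mid w^Tx=w^T\hat x\}$. Moreover, any O-TSOC ${\cal C}(w,v)$ with normal $w$ that contains $\hat x$ must satisfy $w^T\hat x\le w^Tv$ (this is exactly $\hat x\in{\cal K}\cap H(w,v)$), so by the displayed formula ${\rm vol}({\cal C}(w,v))\ge{\rm vol}({\cal C}(w,\hat x))$. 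Consequently the minimum volume of an O-TSOC containing $\hat x$ equals $\min_{w\in{\rm int}({\cal K})}{\rm vol}({\cal C}(w,\hat x))$, and any minimizer in $w$ for the offset $\hat x$ furnishes a global minimizer of the original problem.

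Second, I would invoke Proposition~\ref{p2} with $v:=\hat x$: the normal $w\in{\rm int}({\cal K})$ minimizing ${\rm vol}({\cal C}(w,\hat x))$ is $w=(\hat x_0;-\hat x_1)$, with minimal value $(\hat x_0^2-\|\hat x_1\|^2)^{d/2}V_d$. Combining the two steps shows that ${\cal C}((\hat x_0;-\hat x_1),\hat x)$ is the minimum-volume O-TSOC containing $\hat x$ and that its volume is $(\hat x_0^2-\|\hat x_1\|^2)^{d/2}V_d$, which is the assertion of the corollary.

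The only point that needs a little care is the first step: one must make sure that replacing an arbitrary admissible offset $v$ by $\hat x$ stays within the class of O-TSOCs and does not expel $\hat x$ from the set. This is immediate because ${\cal C}(w,v)$ is determined by $w$ together with the single number $w^Tv$, because $w^T\hat x>0$ for every $w\in{\rm int}({\cal K})$, and because $\hat x$ sits on the boundary face $\{x\mid w^Tx=w^T\hat x\}$ of ${\cal C}(w,\hat x)$. Once this bookkeeping is in place there is no genuine obstacle: the corollary is a direct consequence of Propositions~\ref{p1} and~\ref{p2}.
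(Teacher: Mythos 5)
Your proof is correct and follows essentially the same route as the paper: the paper also first reduces to the case $v=\hat x$ (phrased as a parallel shift of the bounding hyperplane until it touches $\hat x$) and then applies Proposition~\ref{p2}. Your version merely makes the shift argument quantitative via the monotonicity of the volume in $w^Tv$, which is a harmless elaboration of the same idea.
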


\begin{proof}
Let us denote by ${\cal C}(w,v)$ an O-TSOC satisfying the condition.
Then we can take $v = \hat x$, since, otherwise, we can make a parallel 
shift of the boundary hyperplane $\partial H(w,v)$ until it touches $\hat x$
after the shift.
Now we can apply the previous lemma to obtain the result.
\end{proof}

\begin{proposition}
Let $(x_0; x_1)\in {\rm int}({\cal K})$.
If $\sqrt{x_{0}^2 - \|x_{1}\|^2}\leq \varepsilon$, then
\[
0\leq x_{0} -\|x_{1}\| \leq \varepsilon.
\]
The strict inequality version of this relation also holds.

\end{proposition}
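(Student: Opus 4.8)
The plan is to reduce everything to the elementary factorization $x_0^2-\|x_1\|^2=(x_0-\|x_1\|)(x_0+\|x_1\|)$ and then compare the two factors. First I would record that since $(x_0;x_1)\in\mathrm{int}(\mathcal K)$ we have $x_0>\|x_1\|\ge 0$; in particular $x_0-\|x_1\|>0$, which already gives the left inequality $0\le x_0-\|x_1\|$ (and its strict version $0<x_0-\|x_1\|$).

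For the upper bound, I would write $x_0^2-\|x_1\|^2=(x_0-\|x_1\|)(x_0+\|x_1\|)$ and observe that $x_0+\|x_1\|\ge x_0-\|x_1\|>0$, so that
\[
(x_0-\|x_1\|)^2\le (x_0-\|x_1\|)(x_0+\|x_1\|)=x_0^2-\|x_1\|^2\le \varepsilon^2.
\]
Taking square roots (both sides nonnegative) yields $x_0-\|x_1\|\le\varepsilon$, which is the claimed right inequality. The same chain with every ``$\le$'' replaced by ``$<$'' gives the strict version, since $\sqrt{x_0^2-\|x_1\|^2}<\varepsilon$ forces $(x_0-\|x_1\|)^2<\varepsilon^2$.

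There is essentially no obstacle here: the only point requiring the hypothesis $(x_0;x_1)\in\mathrm{int}(\mathcal K)$ is to guarantee $x_0+\|x_1\|>0$, so that the monotonicity step ``smaller factor times larger factor dominates (smaller factor)$^2$'' is legitimate; everything else is a one-line computation. I would state it exactly in that order — sign of the factors first, then the factorization-and-comparison — keeping the strict case as a parenthetical remark rather than a separate argument.
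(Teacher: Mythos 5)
Your argument is correct and is essentially identical to the paper's own proof, which also uses the factorization $(x_0-\|x_1\|)^2\le(x_0+\|x_1\|)(x_0-\|x_1\|)=x_0^2-\|x_1\|^2\le\varepsilon^2$ and takes square roots. Your write-up is in fact slightly more careful, since you explicitly justify the sign of the factors via the interior-point hypothesis and address the strict-inequality variant, both of which the paper leaves implicit.
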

\begin{proof}
The relation obviously holds because
\[
(x_{0}-\|x_{1}\|)^2\leq(x_{0}+\|x_{1}\|)(x_{0}-\|x_{1}\|)\leq x_{0}^2-\|x_{1}\|^2=\varepsilon^2.
\]
\end{proof}

\begin{proposition} \label{eps}
Let ${\cal F}$ and ${\cal F}_i$ be as defined in Section 1.  Let $w \in {\rm int}({\cal K}_i)$ and 
$v \in {\rm int}({\cal K}_i)$, and 
suppose that ${\cal F}_i \subseteq {\cal C}(w, v)$ and ${\rm vol}({\cal C}(w, v))< \varepsilon^{d_i} V_{d_i}$.
Then, there does not exist an $\varepsilon$-interior solution in $\cal F$.
\end{proposition}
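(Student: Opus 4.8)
The plan is to argue by contradiction. Suppose some $x^* = (x^*_1;\ldots;x^*_n) \in {\cal F}$ were an $\varepsilon$-interior solution, i.e.\ $\lambda_{\min}(x^*) \ge \varepsilon$. Then its block-$i$ component satisfies $x^*_i \in {\cal F}_i \subseteq {\cal C}(w,v)$ by hypothesis, while $\lambda_{\min}(x^*)\ge\varepsilon$ forces $x^*_{i0} - \|x^*_{i1}\| \ge \varepsilon$ (equivalently $x^*_i \succeq \varepsilon e_i$). Thus ${\cal C}(w,v)$ is an obliquely truncated second-order cone that contains a point sitting strictly inside ${\cal K}_i$; the goal is to show this already forces ${\rm vol}({\cal C}(w,v)) \ge \varepsilon^{d_i} V_{d_i}$, contradicting the assumed strict inequality.

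Concretely I would carry out two steps. \emph{(i) From an eigenvalue bound to a determinant bound.} Since $x^*_{i0} - \|x^*_{i1}\| \ge \varepsilon > 0$, also $x^*_{i0}+\|x^*_{i1}\| \ge x^*_{i0}-\|x^*_{i1}\| \ge \varepsilon$, hence $\det(x^*_i) = (x^*_{i0}-\|x^*_{i1}\|)(x^*_{i0}+\|x^*_{i1}\|) \ge \varepsilon^2$; in particular $x^*_i \in {\rm int}({\cal K}_i)$. \emph{(ii) Minimum-volume estimate.} Because $w, v \in {\rm int}({\cal K}_i)$, the set ${\cal C}(w,v)$ is an O-TSOC, and it contains $x^*_i$; by Corollary \ref{p3} applied to the second-order cone ${\cal K}_i$, every O-TSOC containing $x^*_i$ has volume at least $\det(x^*_i)^{d_i/2} V_{d_i}$. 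Combining the two steps, ${\rm vol}({\cal C}(w,v)) \ge \det(x^*_i)^{d_i/2} V_{d_i} \ge (\varepsilon^2)^{d_i/2} V_{d_i} = \varepsilon^{d_i} V_{d_i}$, the sought contradiction.

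I do not anticipate a real obstacle: once Corollary \ref{p3} is available, the whole argument is a single volume comparison. The only points needing a little care are bookkeeping ones — checking that ${\cal C}(w,v)$ genuinely qualifies as an O-TSOC containing $x^*_i$ so that Corollary \ref{p3} applies (this uses the hypotheses $w\in{\rm int}({\cal K}_i)$ and $v\in{\rm int}({\cal K}_i)$, together with the fact that the parallel-shift argument behind Corollary \ref{p3} yields the lower bound for \emph{any} O-TSOC containing the point, not merely the minimal one), and verifying the elementary inequality $\det(x^*_i)\ge\varepsilon^2$. Note also that no translation between ``$\varepsilon$-interior feasible for (P)'' and ``membership in ${\cal F}$'' is needed here, since the statement is phrased directly in terms of ${\cal F}$ (and implicitly concerns a block $i$ with ${\cal K}_i$ a second-order cone, the only case in which $V_{d_i}$ and O-TSOCs are defined).
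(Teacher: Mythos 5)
Your proposal is correct and follows essentially the same route as the paper's own proof: argue by contradiction, deduce $\hat x_{i0}^2 - \|\hat x_{i1}\|^2 \ge \varepsilon^2$ from $\hat x_{i0}+\|\hat x_{i1}\| \ge \hat x_{i0}-\|\hat x_{i1}\| \ge \varepsilon$, and invoke Corollary \ref{p3} to lower-bound the volume of any O-TSOC containing $\hat x_i$ by $\varepsilon^{d_i}V_{d_i}$. The only difference is cosmetic — you make explicit the bookkeeping (that the minimal-volume bound applies to every containing O-TSOC) that the paper leaves implicit.
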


\begin{proof}
By contradiction, we assume that there exists an $\varepsilon$-interior solution $\hat x$, say, in $\cal F$.
The $i$-th block $(\hat x_{i0};\hat x_{i1})$ of this solution satisfies
\[
\sqrt{\hat x_{i0}^2 - \|\hat x_{i1}\|^2} \geq \varepsilon
\]
because $\hat x_{i0} + \|\hat x_{i1}\| \geq \hat x_{i0} - \|\hat x_{i1}\| \geq \varepsilon$ (the contraposition of Proposition 2.5).
Since $\hat x_i \in {\cal F}_i \subseteq {\cal C}(w,v)$, ${\cal C}(w,v)$ is an O-TSOC containing $\hat x$
and therefore, in view of Corollary \ref{p3}, 
\[
{\rm vol}({\cal C}(w,v)) \geq (\hat x_{i0}^2 - \|\hat x_{i1}\|^2)^{d_i/2}V_{d_i} \geq \varepsilon^{d_i}V_{d_i}
\]
holds, which is a contradiction to the initial assumption that ${\rm vol}({\cal C}(w,v))<\varepsilon^{d_i} V_{d_i}$.
\end{proof}

In the end of this section, we introduce a scaling operation of (P) and (D).
Let $G_i\in Aut({\cal K}_i)$ for $i=1,\ldots, n$, and consider the dual pair of the problems (SP) and (SD):
\[
\hbox{(SP)}\ \ \ \hbox{find}_{(\tx_1;\ldots;\tx_n)}\ 
\sum \tA_i \tx_i = 0,\ \ \tx_i \in {\cal K}_i,\ i=1, \ldots, n,
\]
where $\tA_i = A_i G_i$ for $i=1 ,\ldots, n$ and
\[
\hbox{(SD)}\hbox{\ \ \ find}_{(y, (\ts_1;\ldots;\ts_n))}\ s_i=-\sum \tA_i ^T y,\ \ s_i \in {\cal K}_i,\ i=1, \ldots, n.
\]
(SP) and (SD) are mutually dual and they are obviously equivalent to (P) and (D).  Following interior-point terminology, 
we call (SP) and (SD) ``scaled problems."  In the algorithm developed in this paper, we mostly work with scaled problems
(SP) and (SD).  The original problems (P) and (D) appear only in the beginning and in the end
of the algorithm.

\section{Basic Lemma and its Consequences}

We extend a fundamental relation established by Chubanov (Formula (2), Section 2.1, \cite{Ch13}) 
and its consequences to the second-order cone case.  
For notational convenience, we develop the results in terms of (P) and (D) in Section 1. 
Later we will apply the results in this section to scaled problems (SP) and (SD). 
It is easy to translate the results written in terms of (P) and (D) into the corresponding results in terms of (SP) and (SD).
In the rest of the paper, we denote 
the S-TSOC of the $k$-th block by ${\cal C}_k$, i.e., ${\cal C}_k = {\cal C}(e_{k}, e_{k}) \subset {\cal K}_k \in \RR^{d_k}$. 
The extension of Chubanov's fundamental relation to the second-order cone case is described as follows.

\begin{lemma}\label{basic} (Basic Lemma)
Suppose $x\in{\cal F}$.
Suppose that $y\in{\cal K}$ satisfies the homogeneous inequality
\[
2\sqrt{n}\|P_{A} {y}\| \leq {y}_{k0}
\]
for some index $k$.  Then, if ${\cal K}_k$ is a half-line, we have
\[
{\cal F}_k \subseteq \left[0, \ \frac1{\sqrt{2}}\right],
\]
and if ${\cal K}_k$ is a second-order cone, then, 
\[
{{\cal F}}_k \subseteq H\left(y_k,\frac1{\sqrt{2}}e_k\right)\cap{\cal K}_k = {\cal C}\left(y_k,\frac1{\sqrt{2}}e_k\right).
\]
In other words, any feasible solution $x\in {\cal F}$ to (P) satisfies $x_k\in [0,1/\sqrt{2}]$ if ${\cal K}_k$ is a half-line, and
$x_k \in {\cal C}(y_k, e_k/\sqrt{2})$ if ${\cal K}_k$ is a second order cone. 
\end{lemma}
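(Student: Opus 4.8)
The plan is to follow Chubanov's argument verbatim: use the orthogonal projection identity to replace $y$ by $P_A y$ when testing against a feasible $x$, bound the resulting inner product by $\|P_A y\|$, and then use self-duality of the cones ${\cal K}_i$ to isolate the $k$-th block.

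First I would write $z = P_A y$, so that $y - z$ lies in the row space ${\rm Im}(A^T)$. For any $x \in {\cal F}$ we have $Ax = 0$, hence $(y-z)^T x = 0$ and therefore $y^T x = z^T x$. Splitting into blocks, $\sum_{i=1}^n y_i^T x_i = z^T x$. Since $y_i, x_i \in {\cal K}_i$ and both the half-line and the second-order cone are self-dual (for a second-order cone block, $y_i^T x_i = y_{i0}x_{i0} + y_{i1}^T x_{i1} \ge y_{i0}x_{i0} - \|y_{i1}\|\,\|x_{i1}\| \ge 0$), every term $y_i^T x_i$ is nonnegative, so in particular $y_k^T x_k \le z^T x$.

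Next I would bound $z^T x$. From $x_i \in {\cal K}_i$ and $\|x\|_\infty \le 1$ one gets $\|x_i\| \le \sqrt 2$ for each $i$: for a second-order cone block, $\|x_{i1}\| \le x_{i0} \le 1$ gives $\|x_i\|^2 = x_{i0}^2 + \|x_{i1}\|^2 \le 2$, and for a half-line block $0 \le x_i \le 1$. Hence by Cauchy--Schwarz, $z^T x = \sum_i z_i^T x_i \le \sum_i \|z_i\|\,\|x_i\| \le \sqrt 2 \sum_i \|z_i\| \le \sqrt 2\,\sqrt n\,\|z\| = \sqrt{2n}\,\|P_A y\|$. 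Combining with the hypothesis $2\sqrt n\,\|P_A y\| \le y_{k0}$ yields $y_k^T x_k \le \sqrt{2n}\,\|P_A y\| \le y_{k0}/\sqrt 2$.

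It then remains to read off the two cases. If ${\cal K}_k$ is a half-line, $y_k^T x_k = y_k x_k$ with $y_k = y_{k0} > 0$, so $x_k \le 1/\sqrt 2$, i.e.\ ${\cal F}_k \subseteq [0, 1/\sqrt 2]$. If ${\cal K}_k$ is a second-order cone, then $y_k^T x_k \le y_{k0}/\sqrt 2 = y_k^T (e_k/\sqrt 2)$ says $x_k \in H(y_k, e_k/\sqrt 2)$; since also $x_k \in {\cal K}_k$, we conclude $x_k \in H(y_k, e_k/\sqrt 2) \cap {\cal K}_k = {\cal C}(y_k, e_k/\sqrt 2)$, which is the asserted inclusion for ${\cal F}_k$. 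I do not expect a genuine conceptual obstacle: the projection identity $y^T x = (P_A y)^T x$ on the feasible set, together with self-duality, does all the work. The only point that needs care is tracking the constants — the bound $\|x_i\|\le\sqrt2$ and the Cauchy--Schwarz factor $\sqrt n$ must combine so that the coefficient $2\sqrt n$ in the hypothesis exactly matches the target bound $y_{k0}/\sqrt2$.
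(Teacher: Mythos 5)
Your proof is correct and takes essentially the same route as the paper's: replace $y$ by $P_{A}y$ against a feasible $x\in{\rm Ker}(A)$, use self-duality of the blocks to get $y_k^Tx_k\le y^Tx$, and bound $(P_{A}y)^Tx$ by $\sqrt{2n}\,\|P_{A}y\|$ via Cauchy--Schwarz. The only cosmetic difference is that the paper applies Cauchy--Schwarz globally using $\|x\|\le\sqrt{2n}$, whereas you apply it blockwise and then sum, arriving at the identical constant.
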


\begin{proof}
We give a proof for the case where ${\cal K}_k$ is a second-order cone. The half-line case is analogous and easy.
We have $\|x\| \leq \sqrt{2n}$ if $\|x\|_\infty =1$ and $x\in{\cal F}$.  Therefore,
\[
y_{k}^T x_{k} \leq y^T x = y^T P_{A} x \leq \|x\| \|P_{A} y\| \leq \|x\| \frac{{y}_{k0}}{2\sqrt{n}} \leq \frac1{\sqrt{2} }{y}_{k0}.
\]
Thus, we see that any $x_k\in {\cal F}_k$ is contained in the half space
\[
\left\{x_k\in\RR^{d_k}\left|\right.\ y_k^T \left(x_k - \frac1{\sqrt{2}}e_k\right) \leq 0 \right\} =H\left(y_k, \frac1{\sqrt{2}}e_k\right).
\]
\end{proof}
\noindent
We call $y$ satisfying the condition of lemma as ``a cut generating vector," and
$k$ and $y_k$ are referred to as ``generating index" and ``generating block," respectively.

Suppose that ${\cal F}_k \in {\cal C}_k$, and a cut generating vector $y$ with generating index $k$ is found.
In the rest of this section, 
we construct an O-TSOC ${\cal C}(w,v)$  which encloses ${\cal F}_k$ and with smaller volume than ${\cal C}_k$ 
by choosing appropriate $w \in \RR^{d_k}$ and $v \in \RR^{d_k}$.
Specifically, we find $w, v$ satisfying the following two conditions:
\begin{eqnarray}
&&
{\cal F}_k \subseteq 
H\left(y_k,\frac1{\sqrt{2}}e_k\right) \cap {\cal C}_{k} \subseteq H(w,v) \cap{\cal K}_k ={\cal C}(w,v), \label{3}\\
&& 
{\rm vol}\left(H\left(y_k,\frac1{\sqrt{2}}e_k\right) \cap {\cal C}_{k}\right) \leq {\rm vol}(H(w,v) \cap{\cal K}_k)
={\rm vol}({\cal C}(w,v)) \nonumber \\ 
&&\ \ \ \ \leq 0.96^{d_k}{\rm vol}({\cal C}_k)=0.96^{d_k} V_{d_k} .  \label{4}
\end{eqnarray}
If these conditions are satisfied, the pair $(w,v)$ is called ``a cut." 
We also use the term ``cut" for the hyperplane $\partial H(w,v)$.  

\begin{figure}[htbp]
	\begin{center}
		\includegraphics[width=10cm,bb=0 0 551 323]{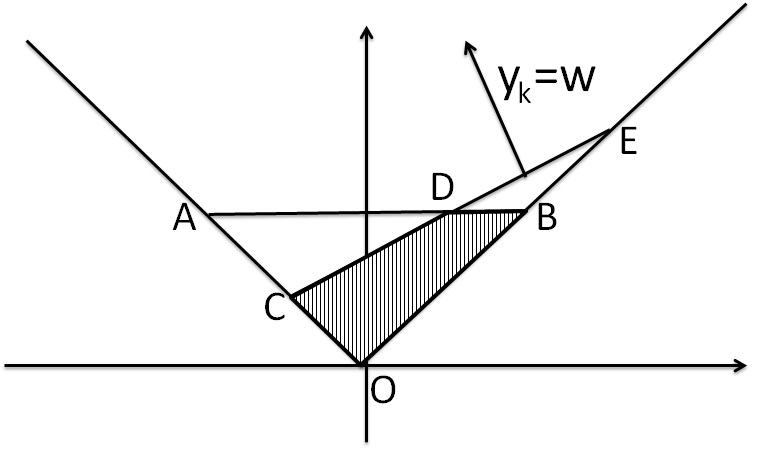}
	\end{center}
	\caption{Case 1}
	\begin{center}
	\vspace{1cm}\hspace{1.5cm}
		\includegraphics[width=12cm,bb=0 0 637 447]{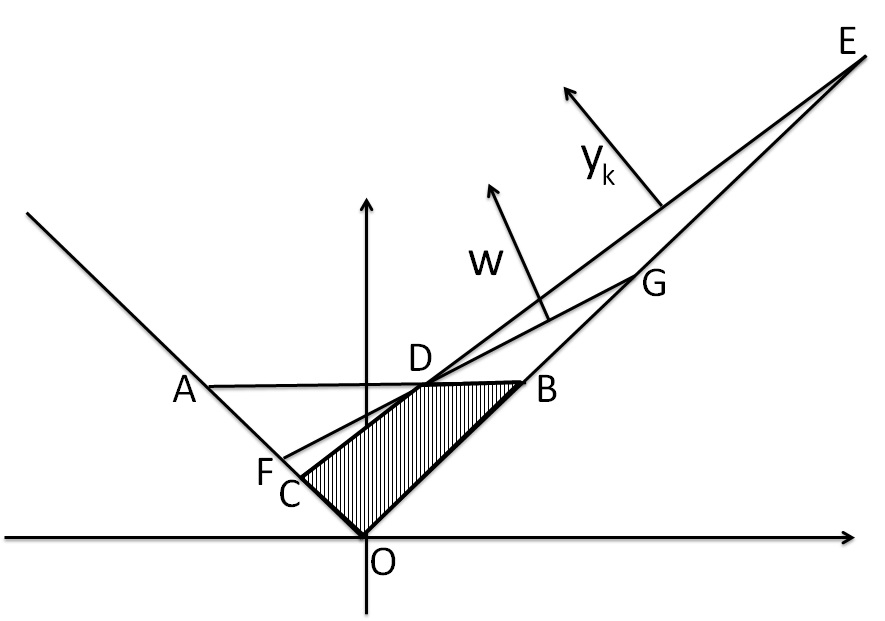}
	\end{center}
	\caption{Case 2}
\end{figure}

We illustrate the situation in Figure 1 for the case where the dimension of ${\cal K}_k$ is two.
In the beginning, we only know that ${\cal F}_k$ is enclosed 
in the triangle AOB ($={\cal C}_k$).
Let a cut generating vector $y$ is given with generating index $k$.   
The quadrangle COBD, which is the intersection of the two triangles AOB and COE $(={\cal C}_k \cap H(y_k,e_k/\sqrt{2}))$, 
is the reduced area where
${\cal F}_k$ is still enclosed. (The triangle ACD is the area which was ``cut off.")
Here, the line CE can be taken as a cut.
Figure 1 intuitively shows that the triangle OCE satisfies the conditions (\ref{3}) and (\ref{4}),
since it encloses the quadrangle COBD and the area of the triangle COE is smaller than the triangle AOB 
%

Now we generalize this intuitive observation in a more quantitative manner to construct an O-TSOC we are aiming at.
We branch into two cases: (Case 1) The angle between $y_k$ and the center axis $e_k$ is small;
(Case 2) The angle between $y_k$ and $e_k$ is large.

\medskip
\noindent {\bf (Analysis of Case 1)}

In general, since ${\cal C}_k\subset {\cal K}_k$, 
we can take, as was suggested in the above, 
$w=y_k$ and $v = e_k/\sqrt{2}$.  This means that we just use
\[
H\left(y_k,\frac1{\sqrt{2}}e_k\right)\cap {\cal K}_k ={\cal C}\left(y_k,\frac1{\sqrt{2}}e_k\right)
\]  
as a confined enclosing area for ${\cal F}_k$ satisfying (\ref{3}) and (\ref{4}).
The volume of ${\cal C}(y_k,e_k/\sqrt{2})$ is, by letting $w=y_k, v = e_k/\sqrt{2}$ in Proposition{\ref{p2}}, given as
\[
{\rm vol}\left({\cal C}\left(y_k,\frac1{\sqrt{2}}e_k\right)\right)=\left(\frac1{\sqrt{2}}\frac{y_{k0}}{\sqrt{y_{k0}^2-\|y_{k1}^2\|}}\right)^k V_{d_k}.
\]
If $y_{k0}^2/(y_{k0}^2-\|y_{k1}\|^2) < 2$, the volume is ensured to decrease.
For later convenience, let ${\hat y}_k= y_k/ y_{k0}$.  We have ${\hat y}_{k0}=1$, and let $\eta =\|{\hat y}_{k1}\|$.
Since $y_k\in {\cal K}_k$ and so is ${\hat y}_k$, the range of $\eta$ is $[0,1]$. 
In terms of $\eta$, the condition $y_{k0}^2/(y_{k0}^2-\|y_{k1}\|^2) < 2$ is written equivalently as $\eta < 1/\sqrt{2}$.
Under this condition, the reduction ratio of the volume is written as:
\[
g_1(\eta) = \frac1{\sqrt{2(1-\eta^2)}^k}.
\]
Observe that this is a monotonically increasing function.

The above idea does not work for $\eta \geq 1/\sqrt{2}$.
See Figure 2.  Observe that $y_k$ is almost parallel to the edge of the cone.
The vertex $E$ is seen further than before.
The quadrangle COBD contains ${\cal F}_k$.
The triangle COE is larger than the original triangle AOB though it contains
the quadrangle COBD. The triangle COE cannot be used to confine the existing area of ${\cal F}_k$ this time.

We consider the following approach to deal with this case.  
We continue explanation with Figure 2.  This time, 
we generate a supporting line (segment) FG which touches the quadrangle COBD at the vertex D, and enclose 
the quadrangle COBD with the triangle FOG. The line FG is chosen so that the triangle FOG contains the quadrangle COBD and 
its volume gets smaller than the original triangle AOB.
We already developed a formula to find a line which goes through $D$ and minimizes the area of the triangle FOG, 
see Proposition \ref{p3}.  We also need to check that the resulting line {\em does not} intersect the quadrangle
COBD.

\medskip
\noindent
{\bf (Analysis of Case 2)}

Consider a half-space $H(w,v)$ which contains $H(y_k,\frac1{\sqrt{2} }e_k)\cap H(e_k,e_k)$ such that 
its boundary $\partial H(w,v) =\{x|w^T x =v\}$ is a supporting hyperplane to 
$H(y_k,\frac1{\sqrt{2}}e_k)\cap H(e_k,e_k)$. Obviously, we have
\[
H\left(y_k,\frac1{\sqrt{2}}e_k\right)\cap {\cal C}_k = 
H\left(y_k,\frac1{\sqrt{2}}e_k\right)\cap H(e_k,e_k)\cap{\cal K}_k
\subseteq H(w,v) \cap {\cal K}_k ={\cal C}(w,v).
\]

Since $\partial H(w,v)$ is a supporting hyperplane to $H(y_k,\frac1{\sqrt{2}}e_k)\cap H(e_k,e_k)$,
without loss of generality, we assume that 
\begin{equation} \label{vinpartial}
v\in \partial H(y_k,\frac1{\sqrt{2}}e_k)\cap \partial H(e_k,e_k).
\end{equation}
Let ${\hat y}_k = y_k/y_{k0}$, then ${\hat y}_{k0} = 1$ and $\|{\hat y}_{k1}\| < 1$. 
We let $\eta=\|{\hat y}_{k1}\|$ as before.  We assume that $v$ is written as follows
\begin{equation}  \label{r0}
v = - \tilde \alpha \hat y_k + \tilde \beta e_k,
\end{equation}
and try to find $w$ and $v$ satisfying the condition such that
\[
{\rm vol}({\cal C}(w,v))<0.96^{d_k} V_{d_k},\ \ \ H\left(y_k,\frac1{\sqrt{2}}e_k\right)\cap H(e_k,e_k)\subseteq H(w,v)
\]
under a certain condition on $y_k$.

Due to (\ref{vinpartial}), we have
\[
{\hat y}_k^T \left(v - \frac{e_k}{\sqrt{2}}\right) = 0,\ \ \ e^T (v - e) = 0.
\]
Therefore, taking ${\hat y}_{k0}=1$ into account, we obtain
\[
-\tilde\alpha(1+\|{\hat y}_{k1}\|^2)+\tilde\beta=\frac1{\sqrt{2}},\ \ \ -\tilde\alpha + \tilde\beta =1.
\]
Solving this with respect to $\tilde \alpha$, we have
\begin{equation} \label{8a}
\tilde\alpha = \frac1{\|{\hat y}_{k1}\|^2}\left(1-\frac1{\sqrt{2}}\right),\ \tilde \beta = 1 + \tilde\alpha.
\end{equation}
Note that $\tilde \alpha > 0$.
Plugging (\ref{8a}) into (\ref{vinpartial}), we obtain
\[
v({y}_k) = \left(1; -\frac1{\|{{\hat y}}_{k1}\|^2}\left(1-\frac1{\sqrt{2}}\right){{\hat y}}_{k1}\right) = (1; - \tilde\alpha{{\hat y}}_{k1}).
\]
Then Proposition \ref{p2} yields that the minimum volume O-TSOC ${\cal C}(w,v)$ is obtained
by taking
\[
w({y}_k) = \left(1; \frac1{\|{{\hat y}}_{k1}\|^2}\left(1-\frac1{\sqrt{2}}\right){{\hat y}}_{k1}\right) = (1; \tilde\alpha{{\hat y}}_{k1})
\]
and 
\[
{\rm vol}({\cal C}(w(y_k), v(y_k)))= g_2(\eta({\hat y}_k)), \ \ \hbox{where}\ \ g_2(\eta) \equiv
\left(1 - \left(\frac1{\sqrt{2}}-1\right)^2\frac1{\eta^2}\right)^{d_k/2}
V_{d_k}.
\]
Observe that $g_2$ is a monotonically increasing function of $\eta$ whose value is positive in the interval
$1-1/\sqrt{2} < \eta \leq 1$.  It is easy to see that $g_2(1) \leq 0.96^{d_k}V_{d_k}$.

Now we examine the condition that $\partial H(w, v)$ defines a supporting hyperplane of $H(e_k,e_k)\cap H(y_k,e_k/\sqrt{2})$.
Since $v \in \partial H(e_k, e_k)$ and $v \in \partial H(y_k, e_k/\sqrt{2})$, 
a necessary and sufficient condition for $\partial H(w, v)$ to be the supporting hyperplane 
is that $w$ is written as a nonnegative combination of $y_k$ and $e_k$.  Since
$(0;{\hat y}_{k1}) = {\hat y}_k - e_k$,
\[
w = \tilde \alpha ({\hat y}_k - e_k) + e_k = \tilde \alpha {\hat y}_k + (1-\tilde \alpha) e_k.
\]
Thus, $w$ is on the line connecting ${\hat y}_k$ and $e_k$, and can be represented as 
a conic combination of ${\hat y}_k$ (or equivalently $y_k$) and $e_k$ if and only if
$0 \leq \tilde \alpha \leq 1$, i.e., 
\[
0 \leq  \frac1{\eta^2}\left(1-\frac1{\sqrt{2}}\right) \leq 1.
\]

\medskip

The analysis so far is summarized as follows:

\begin{enumerate}

\item Suppose that $\eta \leq 1/\sqrt{2}$.  Then, by letting $w = y_k$ and $v = \frac1{\sqrt{2}}e_k$,
O-TSOC ${\cal C}(w, v)$ encloses ${\cal F}_k$ and its volume is bounded by
\[
g_1(\eta) = \left(\frac1{\sqrt{2(1-\eta^2)}}\right)^{d_k} V_{d_k}
\]
The function $g_1(\eta)$ is well-defined in the interval $[0,1)$ 
and is monotonically increasing. In particular, if $\eta=0.6 \leq 1/\sqrt{2}$, we have
$g_1(0.6)=\frac{V_{d_k}}{\sqrt{1.28}^k} < 0.96^{d_k} V_{d_k}$.

\item Suppose that  $\sqrt{1 - \sqrt{1/2}} \leq \eta \leq 1$.  Then by letting
\[
w= \left(1; \frac1{\eta^2}\left(1-\frac1{\sqrt{2}}\right){{\hat y}}_{k1}\right)
\ \ \ \hbox{and}\ \ \ 
v= \left(1; -\frac1{\eta^2}\left(1-\frac1{\sqrt{2}}\right){{\hat y}}_{k1}\right),
\]
O-TSOC ${\cal C}(w, v)$ encloses ${\cal F}_k$ and its volume is bounded by
\[
g_2(\eta) \equiv \left(1 - \frac{1.5-\sqrt{2}}{\eta^2}\right)^{d_k/2} V_{d_k}.
\]
as long as $\sqrt{1 - \sqrt{1/2}} \leq \eta \leq 1$.  In particular, $\eta=0.6$ is in the 
interval.
The function is monotone increasing, and we have $g_2(1)\leq 0.96^{d_k} V_{d_k}.$

\end{enumerate}

Therefore, given the cut generating vector $y$ with generating index $k$, if we determine $w$ and $v$ according to the 
rule that
\begin{enumerate}
\item if $0 \leq \eta \leq 0.6$, then take $w$ and $v$ as in the item 1 above,
\item if $0.6 < \eta \leq 1$, then take $w$ and $v$ as in the item 2 above.
\end{enumerate}
Then, O-TSOC ${\cal C}(w, v)$ encloses ${\cal F}_k$ and 
the bound
\[
{\rm vol}({\cal C}(w, v)) \leq 0.96^{d_k} V_{d_k},
\]
is ensured.  Finally, Proposition \ref{p1} yields that an element of the automorphism group which maps 
${\cal C}_k$ to ${\cal C}(w, v)$ is:
\[ 
G=(\alpha;\beta)^T v\left(\begin{array}{cc} \alpha & -\beta^T \\ -\beta & I + \frac{\beta\beta^T}{1+\alpha}\end{array}\right)
\]
where $\alpha = w_{0}/\sqrt{w_{0}^2 - \|w_{1}\|^2}$ and $\beta = w_{1}/\sqrt{w_{0}^2 - \|w_{1}\|^2}$.

\section{Basic Procedure and its Analysis}

In this section, we explain and analyze the basic procedure which is a direct extension of Chubanov's.
The basic procedure deals with a pair of the dual problems $A x=0,\ x\succeq 0$ and $y = -A^T u,\ y \succeq 0$, and 
finds either a primal interior solution,  or dual nonzero solution, or a cut generating vector.
In the procedure, the iterate $y$ satisfying $e^T y = 1,\ y\succ 0$ is updated every iteration.  
As will be discussed later, the iteration complexity estimate is based on the fact that 
the quantity $1/\|P_A y\|^2$ increases at least by 1/2 at each iteration.
On the other hand, we can show that $y$ is a cut generating vector if $1/\|P_A y\|^2 \geq 4n^3$.
Then, the basic procedure is ensured to terminate in $O(n^3)$ iterations regardless of the choice of 
initial value of $y$.  Before we proceed, we make two important comments:
\begin{enumerate}
\item The basic procedure is mainly applied 
to a scaled system (SP) and (SD).  But we describe the procedure and conduct analysis just for (P) and (D) to avoid
that the notation gets too heavy.


\item 
In Chubanov's algorithm, one iteration of his basic procedure requires just $O(\bar n)$ arithmetic operations
though it computes projection of a vector to ${\rm Ker}(A)$ and appears to require $O(\bar n^2)$ arithmetic operations.
In our case, the complexity of one iteration of the basic procedure is a bit higher and $O(\bar n \max_i d_i)$ arithmetic
operations, because the second-order cone is a bit more complicated than linear inequalities. 
We mention that Pe\~na and Soheili \cite{Pe16} extends the basic procedure to general symmetric cone programming.

\end{enumerate}

\medskip
\noindent
{\bf The Basic Procedure}

\noindent
{\bf Input} Matrix $A$ and vector $y$ such that $e^T y = 1$, $y\succ 0$,

\noindent
{\bf Output} One of the followings :
(i) A cut generating vector $y$ and its generating index $k$,
(ii) Solution $x$ to $A x = 0$, $x\succ 0$; (iii) Solution $y$ to $y=A^T u \succeq 0, \ y \not= 0$,

\medskip
\noindent
{\bf Procedure}

\vspace{-2mm}
\begin{enumerate}

\item Compute $P_A$ and $z=P_A y$.

\item Check termination conditions (based on $z$):
\begin{enumerate} 
\item If $z = 0$, then $y$ is dual interior feasible. Return ``(iii)" and $y$.
\item If $z\succ 0$, then, $z$ is primal interior feasible. Return ``(ii)" and $z$.
\item If $2\sqrt{n}\|P_A y\| \leq y_{k0}$ holds for some $k$, return ``(i)", and, $y$ and $k$ as a cut generating vector
and $k$ a generating index, respectively.
\item If (a)--(c) does not hold, then, go to Step 3.
\end{enumerate}

\item Since the conditions (a) and (b) do not hold, $z\not=0$ and $z\not\in {\rm int}({\cal K})$.
Therefore, there exists an index $i$, say, such that $z_i\not=0$ and $z_i\not\in {\rm int}({\cal K}_i)$ hold.
In the following, we construct $\eta_i \in {\cal K}_i$ such that $\eta_{i0}=1,\ \eta_i^T z_i \leq 0.$
\begin{itemize}
\item 
If ${\cal K}_i$ is  a half-line, then, we set $\eta_i=e_i$. ($\eta_i$, $z_i$ are scalers and $z_i\leq 0$.)
\item 
If ${\cal K}_k$ is a second-order cone,
such $\eta_i$ is computed as follows.  
\begin{itemize}
\item If $z_{i0} \leq 0$, then we let $\eta_i = e_i$; 
\item If
$z_{i0}> 0$, then, we let
\[
\eta_i = e_i - \frac{{\hat z}_i-e_i}{\|{\hat z}_i - e_i\|},
\]
where $\hat z_i = z_i/z_{i0}$ (${\hat z}_{i0} = 1$).
In this case, $\|\hat z_i - e_i\|\geq 1$ holds because $\hat z_i\not\in {\rm int}({\cal K}_i)$.  Therefore, we have
\[
\hat z_i^T\eta_i = \hat z_i^T e_i - \frac{\|\hat z_i - e_i\|^2 + e_i^T(\hat z_i - e_i)}{\|\hat z_i - e_i\|}
= 1 - \|\hat z_i - e_i\| \leq 0.
\]
\end{itemize}
\end{itemize}

\item Let $\eta = (0, \ldots,0, \eta_i,0, \ldots, 0)$.   Then we have $e^T \eta = 1$ and $\eta^T z \leq 0$.

\item Let $p = P_A \eta$.  Computation of $p$ requires $O(\bar n d_i)$ arithmetic operations, since $P_A$ is already computed and $\eta$ contains $d_i$ nonzero elements.

\item Check termination conditions (based on $p$):
\begin{enumerate} 
\item If $p = 0$, then $\eta$ is dual feasible. Return ``(iii)" and $\eta$.
\item If $p \succ 0$, then, $p$ is primal interior feasible. Return ``(ii)" and $p$.
\item If neither of (a) nor (b) holds, go to Step 7.
\end{enumerate}
\item Construct a new iterate $\tilde y$ as follows:
\[\
{\tilde y} = \alpha y + (1-\alpha) \eta,\ \ {\tilde z}= \alpha z + (1-\alpha) p,\ \ \alpha=\frac{p^T(p - z)}{\|z-p\|^2}.
\]
Note that $p\not=0$ and $z\not=0$ are ensured, and that 
$\alpha$ is chosen so that $\tilde z = P_A{{\tilde y}}$ is closest to the origin. Then it follows that $\alpha$ 
is positive as is discussed below.
Since $e^T y= 1$ and $e^T \eta = 1$, we have $e^T {{\tilde y}} = 1$.
Since $\alpha > 0$, ${\tilde y} \succ 0$.
So we continue iteration by letting $y:={{\tilde y}}$, $z:=\tilde z$ and going to Step 2.
\end{enumerate}

\noindent
{\bf (Analysis of change of $1/\|{z}\|^2$)}

We show that $1/\|z\|^2$ increases by at least 1/2 per iteration.
Observe that 
\[p^T {z} = (P_A \eta)^T z = \eta^T (P_A z) = \eta^T {z} = \eta_i^T z_i \leq 0.\]
Furthermore, since 
\[
\|{\tilde z}-p\|^2 = \|z\|^2 + \|p\|^2 - 2z^T p
\]
and neither $z$ nor $p$ is zero, $\alpha \in (0,1)$. Therefore, we have ${\tilde y} \succ 0$.

Letting ${\tilde z} = P_A {\tilde y}$, we have
\[
{\tilde z} =p + \alpha (z - p).
\]
Therefore, 
\[
\|{\tilde z}\|^2 = \alpha^2\|z - p\|^2+2\alpha p^T(z- p) + \|p\|^2.
\]
Substituting the concrete formula of $\alpha$ into the above and using $p^T z \leq 0$, we obtain
\[
\|{\tilde z}\|^2 = \|p\|^2 - \frac{(p^T(z - p))^2 -(z^T p)^2}{\|z-p\|^2}  = \frac{\|z\|^2\|p\|^2 - (z^T p)^2}{\|z\|^2+\|p\|^2-2z^T p} \leq \frac{\|p\|^2\|z\|^2}{\|z\|^2+\|p\|^2}.
\]
Since $P_A$ is a projection matrix, we have $\|p\|^2\leq\|\eta\|^2 \leq 2$.
This implies that
\[
\frac1{\|{\tilde z}\|^2}\geq \frac1{\|z\|^2} + \frac1{\|p\|^2} \geq \frac1{\|z\|^2} + \frac12.
\]

\noindent
{\bf Complexity Analysis of the Basic Procedure}

Now we analyze overall complexity of the basic procedure.
Prior to the iteration of the basic procedure, we compute $P_A$ and $P_A y$.  This
requires $O({\bar n}^2 m)$ arithmetic operations.
In one iteration of the basic procedure, we need to compute $p = P_A\eta$.  This can be done in $O({\bar n}\max_i d_i)$ arithmetic operations as explained in the previous section.

We analyze that the number of iterations of the basic procedure is $O(n^3)$.
Recall the condition that $y$ is a cut generating vector with generating index $i$ is $2\sqrt{n} \|z\| \leq y_{i0} \leq \|y\|_\infty$.
since $e^T y =1$ and $y \succ 0$, we obtain that $1/n \leq \|y\|_\infty$.  Therefore, if 
\[
4n^3 \leq \frac1{\|z\|^2},
\]
then $y$ associated with $z = P_A y$ is a cut generating vector.
As we already analyzed, at each step of the basic procedure $1/\|z\|^2$ increases by 1/2.  Therefore, 
in $O(n^3)$ iterations of the basic procedure, we find a cut generating vector or a primal interior feasible solution or dual nonzero feasible solution. Since one iteration of the basic procedure
requires $O({\bar n}\max_i d_i)$ arithmetic operations, the basic procedure terminates in $O(n^3{\bar n}\max_i d_i + m{\bar n}^2)$ arithmetic operations.

\section{Main Algorithm}

We are ready to describe the main algorithm. 
This algorithm (i) finds an interior feasible solution to (P),
(ii) finds a nonzero solution to (D), or (iii) concludes that there exists no $\varepsilon$-interior
feasible solution in $O(n\log\varepsilon^{-1})$ iterations of the basic procedure, 
where the basic procedure requires $O(n^3{\bar n}{\max_i d_i}+m{\bar n}^2)$ arithmetic operations.
Thus, the overall arithmetic operations of the algorithm presented in this section is $O(n(n^3{\bar n}{\max_i d_i}+m{\bar n}^2))\log\varepsilon^{-1})$.

\medskip\noindent
{\bf The Main Algorithm}

\noindent
{\bf Input} A matrix $A$ and a cone ${\cal K}$ which is the direct product of second-order cones and half-lines,

\noindent
{\bf Output} One of the followings :
(i) Solution $x$ to $A x = 0$, $x\succ 0$; (ii) Solution $y$ to $y=-A^T u \succeq 0, \ y \not= 0$,
(iii) Declare ``No $\varepsilon$-interior solution to $Ax = 0$, $x\succeq 0$."

\medskip
\noindent
{\bf Algorithm}

\vspace{-2mm}
\begin{enumerate}

\item Let $t:=0$, $v_i :=1$, $i=1, \ldots, n$, $A^{(0)} := A$, $y^{in}:=e/n$,
$M^{(0)}=I$, $A^{(0)}=A$;

\item Call the Basic Procedure (BP) by setting $A^{(t)}$ and $y^{in}$ as Input (See Section 4).

\begin{itemize}

\item[(C1)] If (BP) returns a cut generating vector $y$ and generating index $k$, 
then proceed to Step 3.

\item[(C2)] If (BP) returns an interior solution $\tilde x$ to $A^{(t)} \tilde x = 0,\ \tilde x\succeq 0$, then we let $x:=M^{(t)} \tilde x$
and return $x$ as an interior solution to (P).

\item[(C3)] If (BP) returns a nonzero solution $y$ to $y = -(A^{(t)})^T u,\ y\succeq 0$, $y\not=0$, then return 
$y$ as a nonzero solution to (D).

\end{itemize}

\item In the case of (C1),
\begin{itemize}
\item if ${\cal K}_k$ is a half space then set $G =1/\sqrt{2}$.
\item if ${\cal K}_k$ is a second-order cone and $\eta \leq 0.6$ where $\eta= \|y_{k1}\|/y_{k0}$, then set $w = (y_{k0}; y_{k1})$, $v = e_k/\sqrt{2}$,
and construct $G$ according to Prposition \ref{p1} as an automorphism transformation of ${\cal K}_k$ mapping ${\cal C}_k$ to ${\cal C}(w,v)$.
\item if ${\cal K}_k$ is a second-order cone and $0.6< \eta \leq 1$ where $\eta= \|y_{k1}\|/y_{k0}$, then set 
\[
w= \left(1; \frac1{\eta^2}\left(1-\frac1{\sqrt{2}}\right)\frac{y_{k1}}{y_{k0}}\right)
\ \ \ \hbox{and}\ \ \ 
v= \left(1; -\frac1{\eta^2}\left(1-\frac1{\sqrt{2}}\right)\frac{y_{k1}}{y_{k0}}\right),
\]
and construct $G$ according to Proposition \ref{p1} as an automorphism transformation of ${\cal K}_k$ mapping ${\cal C}_k$ to ${\cal C}(w,v)$.
\end{itemize}
\item We set
\[
A_k^{(t+1)} := A_k^{(t)} G,\ v_k := {\rm det}(G) v_k,\ M_k^{(t+1)} = M_k^{(t)} G.
\]   
Regarding other blocks than $k$, we let $A_i^{(t+1)}= A_i^{(t)}.$
\item If $v_k \leq \varepsilon^{d_k}$, then, we conlude that there is no $\varepsilon$ interior feasible solution to (P).  
Otherwise, we set $t:=t+1$ and return to Step 2.

\end{enumerate}

\noindent
{\bf Overall Complexity Analysis}

Now we analyze the complexity of the main algorithm.
In the beginning of the algorithm, ${\cal F}_k$ is enclosed in ${\cal C}_k$ for all $k$.
The algorithm terminates

\medskip\noindent (i) In the middle of the basic procedure by finding an interior solution to (P) or nonzero solution to (D);

\noindent (ii) Detecting that there is no $\varepsilon$-interior feasible solution.

\medskip
Let $p_1 < \ldots < p_q$ be the iteration number in which the $k$-th block is transformed.
Let $G_k^{p_1}, \ldots, G_k^{p_q}$ be the matrix of the automorphism
transformation associated with ${\cal K}_k$ performed in the course of the algorithm.  Then it follows that
\[
{\cal F}_k \subseteq G_k^{(p_{q})} G_{k}^{(p_1)} \ldots G_k^{(p_1)} {\cal C}(e_k, e_k).
\]
So, in view of Proposition \ref{p3}, we conclude (ii) when the following relation holds:
\[
{\rm det} (G_k^{(p_q)}) \ldots {\rm det} (G_k^{(p_1)}) V_{d_k} \leq 0.96^{qd_k} V_{d_k} < \varepsilon^{d_k} V_{d_k}.
\]
This relation implies that $q$ is bounded by $O(\log \varepsilon^{-1})$.
The most time consuming case is that the number of occurrence of cutting process is almost even 
$(\sim \log \varepsilon^{-1})$ for all cones
before termination of the algorithm.
Then, the number of execution of the basic procedure is bounded by $O(n \log \varepsilon^{-1})$.

Since $O(n\log\varepsilon^{-1})$ executions of the basic procedure might be necessary before completion of the whole procedure,
the complexity of the proposed algorithm is $O(n (n^3{\bar n}\max_i d_i +m{\bar n}^2) \log \varepsilon^{-1})$. 



\section{Remarks}

Before concluding this paper, we make some remarks.

\subsection{Condition Number}

We define a condition number of (P) as follows:
\[
{\rm cond}(A,{\cal K}) = \min_{x\in {\cal F}}\left(\frac{\lambda_{\max}(x)}{\lambda_{\min}(x)} \right)
\]
If (P) have an interior feasible solution, ${\rm cond}(A,{\cal K})$ stays finite, but it becomes infinity if (P) is feasible but
is not interior feasible.  This quantity is useful in evaluating the complexity of the main algorithm developed
in this paper.  It is worth noting that ${\rm cond}(A,{\cal K}) = \varepsilon_P^{-1}$, where $\varepsilon_P$ is the optimal
value of the following problem:
\[
\max\ \varepsilon,\ \ \ A x = 0,\ \ \ e \succeq x \succeq \varepsilon e.
\]

\subsection{Running Time to Find a Feasible Solution to an Interior-feasible System}

Suppose that we set $\varepsilon=0$ and run the main algorithm.
The algorithm will never stop if (P) does not have an interior feasible solution.  But if there exists an 
interior feasible solution, then the algorithm is ensured to terminate in $O(n\log \varepsilon_P^{-1})$ execution of 
of the basic procedure, where $\varepsilon_P$ is the optimal value of the following optimization problem:
\[
\max \varepsilon\ \ \hbox{subject to}\ Ax = 0,\ \ \ \|x\|_\infty \leq 1,\ \ \ x\succeq \varepsilon e.
\]
In view of (\ref{equiv}), we have $(2\varepsilon_P)^{-1} \leq {\rm cond}(A,{\cal K}) \leq \varepsilon_P^{-1}$.  Therefore, the algorithm is 
capable of finding an interior solution to (P) in $O(n\log {\rm cond}(A,{\cal K}))$ times execution of the basic procedure.

\subsection{SOCP Feasibility Problem}

Suppose that we deal with the problem of finding an interior-feasible solution $x$ to 
\begin{equation}\label{aaaaa}
Ax =b,\ x\in {\widetilde {\cal K}},
\end{equation}
where ${\widetilde {\cal K}}$ is a direct product of $n$ second-order cones/half-lines. 
We assume the system is interior-feasible.
To solve this problem, we consider the homogenized system
\[
Ax -b\tau = 0,\ x \in {\widetilde {\cal K}},\ \tau \in {\RR}_+,
\]
where $\RR_+$ is a half-line.
We run the main algorithm with $\varepsilon = 0$. The algorithm stops in 
$O(n\log({\rm cond}((A\ -b);{\widetilde {\cal K}}\times {\RR}_+)))$ iterations. 

For any feasible solution to $\tilde x$, 
\[
\frac{\lambda_{\max}((\tilde x; 1))}{\lambda_{\min}((\tilde x; 1))} 
\]
is an upper bound for ${\rm cond}((A\ -b);{\widetilde{\cal K}}\times {\RR}_+)$.

This implies that if the system (\ref{aaaaa}) have an interior feasible 
solution whose components are more or less in the same magnitude, 
then less number of iterations is required to find an interior feasible solution.

\section{Conclusion}

We extended Chubanov's algorithm for linear programming to second-order cone programming.
The extension to semidefinite programming and symmetric cone programming is an interesting
topic for further research.
In the case of linear program, Chubanov \cite{Ch13} developed a technique to reduce the complexity by a factor of $n$ by
initiating a basic procedure using the second last iterate of the preceding basic procedure.
This idea does not directly carry over to the second-order cone program.  Extending the technique to the second-order cone
program is another interesting subject. 
As was mentioned in introduction, Pe\~na and Soheili developed an extension
to Chubanov's algorithm to symmetric cone programming.  We hope that 
comparison of our extension and their algorithm will shed new insight into substance of Chubanov's idea in conic programming.

\section*{Acknowledgement}

We would like to thank Dr.~Bruno F.~Louren\c{c}o of Seikei University for his careful reading of the first manuscript and suggesting improvement,
and for bringing the reference \cite{Pe16} to our attention. 
The aurhors are supported in part with Grant-in-Aid for Scientific Research (B), 2015, 15H02968,
from the Japan Society for the Promotion of Sciences.  The first author is supported in part with Grant-in-Aid for Young Scientists (B), 15K15941.

\section*{Appendix}

In this section, we describe how we can solve a standard SOCP problem with the algorithm developed in this paper.
Consider the pair of primal and dual SOCP:
\[
{\rm (P)}\ \min c^T x\ \ \ \hbox{subject to}\ Ax = b,\ \ \ x\succeq 0
\]
and
\[
{\rm (D)}\ \max b^T y\ \ \ \hbox{subject to}\ s=c- A^T y,\ \ \ s\succeq 0.
\]
Suppose (P) and (D) have interior feasible solutions.  Then
(P) and (D) have optimal solutions with the same optimal value.
Furthermore, the optimal set is bounded for the both problems.
In this appendix, 
we explain, given any $\delta > 0$, how the algorithm developed in this paper can be used to 
find a feasible solutions satisfying $c^T x - b^T y \leq \delta$.  If $\delta$ is sufficiently
small, then $x$, $y$ and $s$ are approximate optimal solutions to (P) and (D).

It is well-known that (P) and (D) are equivalent to the following problem.
\[
{\rm (PD)}\ {\rm find}\ Ax=b,\ \ \ c-A^Ty = s,\ \ \ c^T x - b^T y = 0,\ \ \ x \succeq 0,\ \ \ s\succeq 0.
\]

We have the following proposition.

\noindent{\bf Proposition A.1}
{\it 
Let $\tilde x$ and $(\tilde y, \tilde s)$ be an interior feasible solution to (P) and (D),
respectively.  Let 
\[
\hat\varepsilon = \min(\lambda_{\min}(\tilde x), \lambda_{\min}(\tilde s),1)\ \ \ \hbox{and}\ \ \ M = {c^T \tilde x - b^T \tilde y}.
\]
If $0\leq t \leq 1/2$, 
\[
({\rm PD}(t))\ \ \ {\rm find}\ Ax=b,\ \ \ c-A^Ty = s,\ \ \ 0 \leq c^T x - b^T y \leq 2 t M,\ \ \ x \succeq 0, \ \ \ s \succeq 0,
\]
has $t \min(\hat\varepsilon, M)$-interior feasible solution.
}

\begin{proof}
$(\tilde x, (\tilde y, \tilde s))$ is a $\hat\varepsilon$-interior feasible solution to (PD).
Let $x^*$, $(y^*; s^*)$ be optimal solutions to (P) and (D), and define
\[
x(t) = t\tilde x + (1-t) x^*,\ \ y(t) =  t\tilde y + (1-t) y^*,\ \ s(t)= t\tilde s + (1-t) s^*.
\]
Then, $(x(t), (y(t), s(t)))$ is a $t\hat \varepsilon$-interior feasible solution to (PD) for any $t\in [0,1]$.
We also have
\[
c^T x(t) - b^T y(t) = t (c^T \tilde x - b^T \tilde y)= tM.
\]
It is easy to check that $x(t)$, $(y(t),s(t))$ is indeed $t \min(\hat\varepsilon, M)$-interior feasible solution
to (PD($t$)) for $t\in [0,1/2]$, and we are done.

\end{proof}

We may consider $\tilde x$, $\tilde s$ and $\tilde y$ as a feasible solution obtained in Phase I.
Now we are ready to describe an algorithm to solve (P) and (D) approximately.
The algorithm works in two phases.

\begin{enumerate}

\item {\bf(Phase I)}\ We apply the feasibility algorithm described in Section 6.3
to 
\[
{\rm (PD)}\ 
Ax = b,\ s= c - A^T y,\ x \succeq0, \ s \succeq 0
\]
with $\varepsilon = 0$.
(This problem contains $y$ as a free variable, but
we can apply the main algorithm after rewriting the condition $s=c - A^T y$ with $P_A(c-s)=0$ to eliminate
$y$. In the end of the algorithm, we can recover $y$ from $s$.)
Then, we will find an interior feasible solution 
$(x, s, y) = (\tilde x, \tilde s, {\tilde y})$.

The complexity of this step is estimated with the result in Section 6.3,
in terms of the condition number. Let 
$\tilde\varepsilon = \min(\lambda_{\min}(\tilde x), \lambda_{\min}(\tilde s), 1)$.
Then, $(\tilde x, {\tilde s}, {\tilde y}))$ is an $\tilde\varepsilon$-interior 
feasible solution to (PD).  

\item {\bf (Phase II)}\ If we want to reduce the objective value by a factor of $t (\leq1/2)$ from $c^T \tilde x - b^T \tilde y$,
we solve the interior-feasibility problem (PD($t$)) above, which is ensured to have an 
$t\min(\tilde\varepsilon, c^T\tilde x - b^T\tilde y)$-interior feasible solution.
The complexity is estimated with the result in Section 6.3, again, in terms of the condition number of (PD($t$)).

\end{enumerate}

\thebibliography{99}
\bibitem{AlGo} F. Alizadeh and D. Goldfarb: Second-order cone programming.
{\it Mathematical programming}, Vol.95 (2003), pp.3-51.
\bibitem{BaDeJu}
A. Basu, J. A. De Loera, M. Junod: On Chubanov's method for linear programming.
{\it Informs journal on computing}, Vol.26 (2013), pp.336-350.
\bibitem{Ch10}
S. Chubanov: A polynomial relaxation-type algorithm for linear programming.
Optimization Online, February, 2011.
\bibitem{Ch12}
S. Chubanov: A strongly polynomial algorithm for linear systems having a binary
solution. {\it Mathematical Programming}, Vol.134 (2012), pp.533-570.
\bibitem{Ch13}
S. Chubanov: A polynomial projection algorithm for linear programming for linear feasibility problems.
{\it Mathematical Programming}, Vol.153 (2015), pp.687-713.
\bibitem{Cu15} F. Cucker, J. Pe\~{n}a and V. Roshchina: Solving second-order conic systems with variable precision.
{\it Mathematical Programming}, Vol.150 (2015), pp 217-250.
\bibitem{Kar} N. Karmarkar:
A New Polynomial Time Algorithm for Linear Programming. {\it Combinatorica}, Vol.4 (1984), pp.373-395.
\bibitem{Kha1}L. G. Khachiyan: A Polynomial Algorithm in Linear Programming. Doklady
Akademiia Nauk SSSR, Vol.244 (1979), pp.1093-1096 (in Russian, translated in Soviet Mathematics
Doklady, Vol.20 (1979), pp.191-194).
\bibitem{Kha2}L. G. Khachiyan:
Polynomial Algorithms in Linear Programming. Zhurnal
Vychisditel'noi Matematiki i Matematicheskoi Fiziki, Vol.20 (1980), pp.51-68 (in Russian). 
\bibitem{LiRoTe}
D. Li, K. Roos, and T. Terlaky:
A polynomial column-wise rescaling von Neumann algorithm. Optimization Onine, June, 2015.
\bibitem{Lo98} M. S. Lobo, , L. Vandenberghe, S. Boyd, H. Lebret:
Applications of second-order cone programming.
{\it Linear Algebra and its Applications}, Vol.284, (1998), pp.193-228.
\bibitem{MoTs} R. D. C. Monteiro and T. Tsuchiya: Polynomial convergence of primal-dual algorithms for the second-order cone program based on the MZ-family of directions.
{\it Mathematical Programming}, Vol.88 (2000), pp.61-83.
\bibitem{NeNe}
Y. E. Nesterov and A. S. Nemirovskii. {\it Interior point polynomial methods in convex programming:
Theory and Applications}, SIAM, Philadelphia, 1994.
\bibitem{NeTo}Y. E. Nesterov and M. J. Todd: Self-scaled barriers and interior-point methods for convex programming
{\it Mathematics of Operations research}, Vol.22, pp.1-42 (1997).

\bibitem{Pe16} J. Pe\~na and N. Soheili: Solving conic systems via projection and rescaling. arXiv:1512.06154v2[math.OC], April, 2016.
\bibitem{Ro15} K. Roos: An improved version of Chubanovfs method for solving a homogeneous feasibility problem.
Optimization online, November 2015 (Revised: July 2016).
\bibitem{Tsu}T. Tsuchiya: A Convergence Analysis of the Scaling-invariant Primal-dual Path-following Algorithms for Second-order Cone Programming. Optimization Methods and Software, Vol.11/12 (1999), pp.141-182.
\end{document}